\providecommand{\tightlist}{%
  \setlength{\itemsep}{0pt}\setlength{\parskip}{0pt}}
\renewcommand{\@seccntformat}[1]{\csname the#1\endcsname.\;\;}
\let\oldparagraph\paragraph
\renewcommand{\paragraph}[1]{\oldparagraph{#1}\mbox{}}
\let\oldsubparagraph\subparagraph
\renewcommand{\subparagraph}[1]{\oldsubparagraph{#1}\mbox{}}
\def\fps@figure{htbp}
\tikzset{zigzag/.style={gray,decoration={zigzag,segment length=5.5,amplitude=1.1},line join=round,decorate}}
\newcommand{\ZZ}{\mathbb{Z}}
\newcommand{\A}{\mathcal{A}}
\newcommand{\B}{\mathcal{B}}
\renewcommand{\C}{\mathcal{C}}
\newcommand{\D}{\mathcal{D}}
\renewcommand{\G}{\mathcal{G}}
\newcommand{\typ}[1]{\widetilde{#1}}
\newcommand{\typeX}[1]{[\hspace{0.5pt}#1\hspace{0.5pt}]_X}
\newcommand{\typeY}[1]{[\hspace{0.5pt}#1\hspace{0.5pt}]_Y}
\newcommand{\simsum}{{\textstyle\mathop{\widetilde{\sum}}}}
\newcommand{\Fun}[1][R]{\Phi_{#1}}
\newcommand{\uFun}[1][R]{\typ{\Phi}_{#1}}
\newcommand{\tXFun}[1][R]{\simsum_U\Phi_{#1}[U,Y)}
\newcommand{\xFun}[1][R]{\simsum_U\Phi_{#1}[U,y)}
\newcommand{\tYFun}[1][R]{\simsum_V\hspace{0.8pt}\Phi_{#1}(X,V]}
\newcommand{\yFun}[1][R]{\simsum_V\hspace{0.8pt}\Phi_{#1}(x,V]}
\newcommand{\uNonEmptyE}{\typ{E}_+}
\newcommand{\Pow}{\mathcal{P}}
\newcommand{\Der}{\mathrm{Der}}
\newcommand{\Sym}{\mathcal{S}}
\newcommand{\Cyc}{\mathcal{C}}
\newcommand{\Bal}{\mathrm{Bal}}
\newcommand{\Par}{\mathrm{Par}}
\newcommand{\uPar}{\typ{\Par}}
\newcommand{\Lah}{\mathrm{Lah}}
\DeclareMathOperator{\cyc}{cyc}
\DeclareMathOperator{\Fix}{Fix}
\newcommand{\Id}{\mathrm{Id}}
\newcommand{\Set}{\mathrm{Set}}
\newcommand{\FinSet}{\mathrm{FinSet}}
\newcommand{\CoreFinSet}{\mathbb{B}}
\newcommand{\eps}{\epsilon}
\newcommand{\vphi}{\varphi}
\newtheorem{theorem}{Theorem}
\newtheorem{lemma}[theorem]{Lemma}
\newtheorem{proposition}[theorem]{Proposition}
\newtheorem{corollary}[theorem]{Corollary}
\theoremstyle{definition}
\title{A species approach to Rota's twelvefold way}
\author{Anders Claesson}
\date{7 September 2019}
\begin{document}
\maketitle

\thispagestyle{empty}

\begin{abstract}
  An introduction to Joyal's theory of combinatorial species is given
  and through it an alternative view of Rota's twelvefold way emerges.
\end{abstract}

\hypertarget{introduction}{%
\section{Introduction}\label{introduction}}

\begin{minipage}{0.135\textwidth}
\hspace{0.01\textwidth}\includegraphics[width=0.84\textwidth]{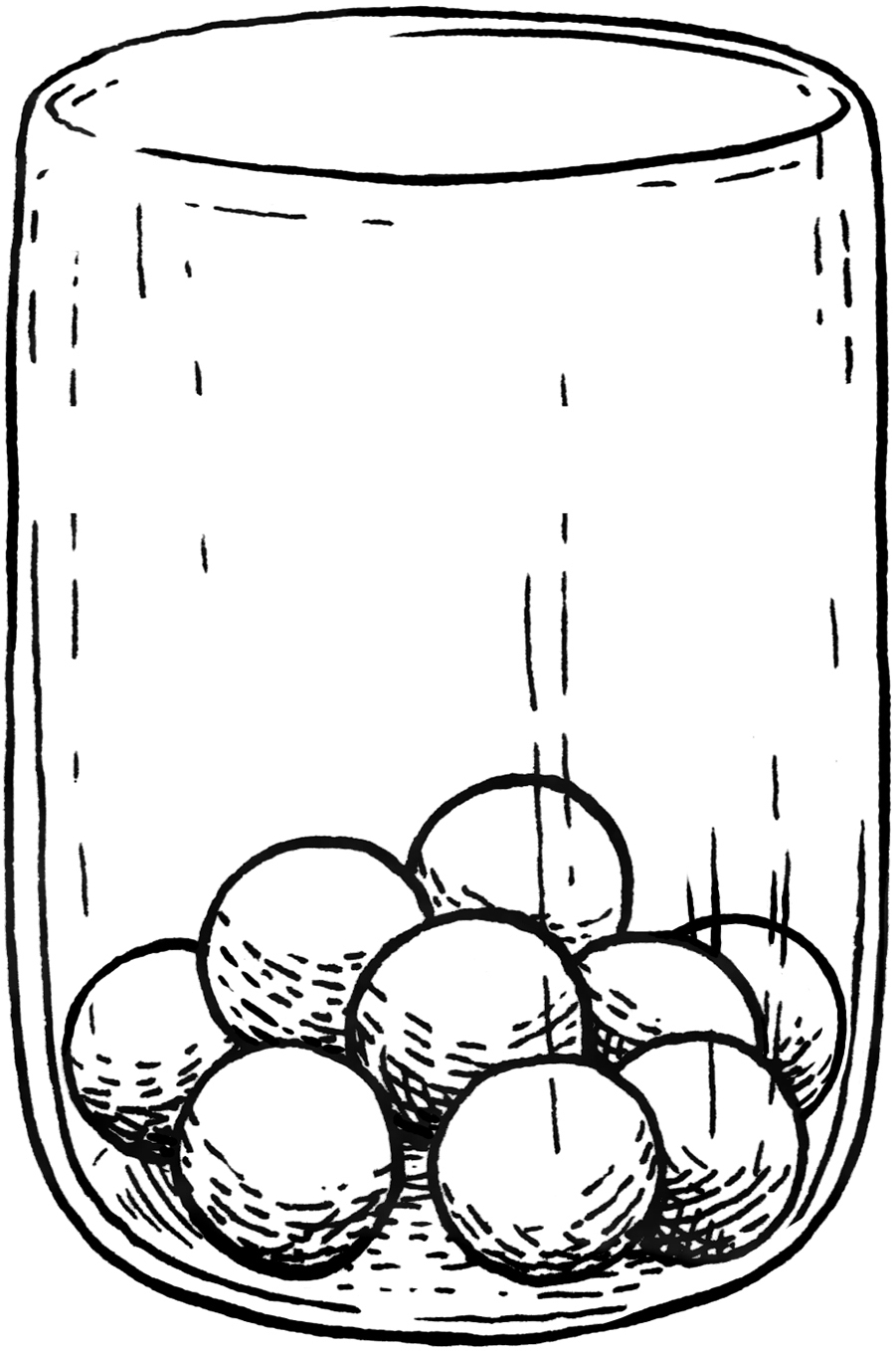}\hspace{0.15\textwidth}
\end{minipage}
\begin{minipage}{0.86\textwidth}
In how many ways can $n$ balls be distributed into $k$ urns? If there
are no restrictions given, then each of the $n$ balls can be freely
placed into any of the $k$ urns and so the answer is clearly $k^n$.
But what if we think of the balls, or the urns, as being identical
rather than distinct?
What if we have to place at least one ball, or can
place at most one ball, into each urn?
\end{minipage}\vspace{3pt}

The twelve cases resulting from
exhaustively considering these options are collectively referred to as
the \emph{twelvefold way}, an account of which can be found in Section~1.4 of
Richard Stanley's~\cite{EC1} excellent {\it Enumerative Combinatorics}, Volume~1. He
attributes the idea of the twelvefold way to Gian-Carlo Rota and its name to
Joel Spencer.

Stanley presents the twelvefold way in terms of counting functions,
$f:U\to V$, between two finite sets. If we think of $U$ as a set of
balls and $V$ as a set of urns, then requiring that each urn contains
at least one ball is the same as requiring that $f$ is surjective, and
requiring that each urn contain at most one ball is the same as
requiring that $f$ is injective. To say that the balls are identical, or
that the urns are identical, is to consider the equality of such
functions up to a permutation of the elements of $U$, or up to a
permutation of the elements of $V$.

We provide a uniform treatment of the twelvefold way, as well as some
extensions of it, using Joyal's theory of combinatorial species, and in
doing so we will endeavor to give an accessible introduction to this
beautiful theory. Section~\ref{combinatorial-species} contains this
introduction to species and it is based on two articles of
Joyal~\cite{Joyal1981, Joyal1986} and the book {\it Combinatorial species and
tree-like structures} by Bergeron, Labelle, and
Leroux~\cite{Bergeron1998}. In Section~\ref{the-twelvefold-way} we give
the species interpretation of the twelvefold way.

\hypertarget{combinatorial-species}{%
\section{Combinatorial species}\label{combinatorial-species}}

Before defining what a
combinatorial species is we shall consider simple graphs as a motivating
example.

A simple undirected (labeled) graph is a pair $(U,E)$, where $U$ is
a set of vertices and $E$ is a set of edges, which are 2-element
subsets of $U$. For instance, the graph with vertex set
$U = \bigl\{a,b,c,d,e\bigr\}$ and edge set
$E = \bigl\{ \{c,b\}, \{b,d\}, \{d,a\}, \{a,e\}, \{d,e\}\bigr\}$ is
depicted here:
\[
  \begin{tikzpicture}[semithick, scale=0.9]
    \begin{scope}[every node/.style={circle,draw,fill=black,minimum size=3.5pt,inner sep=0pt}]
      \node [label={[yshift=2pt]\footnotesize $c$}] (A) at (0,0.5) {};
      \node [label={[yshift=2pt]\footnotesize $b$}] (B) at (1,0.5) {};
      \node [label={[yshift=2pt]\footnotesize $d$}] (C) at (2,0.5) {};
      \node [label={[xshift=8pt, yshift=-7pt]\footnotesize $e$}] (D) at (3,0) {};
      \node [label={[xshift=8pt, yshift=-6pt]\footnotesize $a$}] (E) at (3,1) {};
      \draw (A) -- (B) -- (C) -- (D) -- (E) -- (C);
    \end{scope}
  \end{tikzpicture}
\]
Informally, a graph in which vertices are indistinguishable is said
to be unlabeled:
\[
  \begin{tikzpicture}[semithick, scale=0.9]
    \begin{scope}[every node/.style={circle,draw,fill=black,minimum size=3.5pt,inner sep=0pt}]
      \node [label={[text=white, yshift=2pt]\footnotesize $c$}] (A) at (0,0.5) {};
      \node [label={[text=white, yshift=2pt]\footnotesize $b$}] (B) at (1,0.5) {};
      \node [label={[text=white, yshift=2pt]\footnotesize $d$}] (C) at (2,0.5) {};
      \node [label={[text=white, xshift=8pt, yshift=-7pt]\footnotesize $e$}] (D) at (3,0) {};
      \node [label={[text=white, xshift=8pt, yshift=-6pt]\footnotesize $a$}] (E) at (3,1) {};
      \draw (A) -- (B) -- (C) -- (D) -- (E) -- (C);
    \end{scope}
  \end{tikzpicture}
\]
Formally, an \emph{isomorphism} of graphs $(U, E)$ and $(V,E')$
is a bijection $\sigma:U\to V$ that preserves the adjacency relation:
$\{x,y\}\in E$ if and only if $\{\sigma(x), \sigma(y)\}\in E'$.
Then an \emph{unlabeled graph} is an isomorphism class.

A species defines not only a class of (labeled) combinatorial objects
but also how those objects are affected by relabeling. This mechanism is
called \emph{transport of structure}. In particular, a species carries the
necessary information for defining the unlabeled counterpart of itself.

For graphs one defines $\G[U]$ as the set of graphs on the finite set
$U$, and for a bijection $\sigma: U\to V$ one defines the transport
of structure, $\G[\sigma]:\G[U]\to \G[V]$, by
$\G[\sigma](U,E) = \big(V,\sigma(E)\big)$, where
$\sigma(E) = \bigl\{\{\sigma(x),\sigma(y)\}: \{x,y\} \in E \bigr\}$.
For instance, if
$\sigma = \left(\begin{smallmatrix}a & b & c & d & e \\ 3 & 1 & 4 & 5 & 2 \end{smallmatrix}\right)$
is the bijection from $U=\{a,b,c,d\}$ to $V=\{1,2,3,4\}$ mapping $a$ to $3$, $b$ to $1$, etc.,
then
\[
  \begin{tikzpicture}[scale=0.95]
    \begin{scope}[semithick, every node/.style={circle,draw,fill=black,minimum size=3.5pt,inner sep=0pt}]
      \node [label={[yshift=2pt]\footnotesize $c$}] (A) at (0,0.5) {};
      \node [label={[yshift=2pt]\footnotesize $b$}] (B) at (1,0.5) {};
      \node [label={[yshift=2pt]\footnotesize $d$}] (C) at (2,0.5) {};
      \node [label={[xshift=8pt, yshift=-7pt]\footnotesize $e$}] (D) at (3,0) {};
      \node [label={[xshift=8pt, yshift=-6pt]\footnotesize $a$}] (E) at (3,1) {};
      \draw (A) -- (B) -- (C) -- (D) -- (E) -- (C);
    \end{scope}
    \begin{scope}[semithick, xshift=30,yshift=-45,every node/.style={circle,draw,fill=black,minimum size=3.5pt,inner sep=0pt}]
      \node [label={[yshift=-16pt]\footnotesize $4$}] (a) at (0,0.5) {};
      \node [label={[yshift=-16pt]\footnotesize $1$}] (b) at (1,0.5) {};
      \node [label={[yshift=-16pt]\footnotesize $5$}] (c) at (2,0.5) {};
      \node [label={[xshift=8pt, yshift=-7pt]\footnotesize $2$}] (d) at (3,0) {};
      \node [label={[xshift=8pt, yshift=-6pt]\footnotesize $3$}] (e) at (3,1) {};
      \draw (a) -- (b) -- (c) -- (d) -- (e) -- (c);
    \end{scope}
    \begin{scope}[thin, decoration={
        markings,
        mark=at position 0.5 with {\arrow{>}}}
      ]
      \draw[dashed, postaction={decorate}] (A) -- (a);
      \draw[dashed, postaction={decorate}] (B) -- (b);
      \draw[dashed, postaction={decorate}] (C) -- (c);
      \draw[dashed, postaction={decorate}] (E) -- (e);
      \node at (0.0,-0.5) {\small $\G[\sigma]$};
    \end{scope}
    \draw[thin, dashed, decoration={
        markings,
        mark=at position 0.4 with {\arrow{>}}},
        postaction={decorate}
      ] (D) -- (d);
  \end{tikzpicture}
\]
In terms of transport of structure, two graphs $s\in \G[U]$ and
$t\in \G[V]$ are isomorphic if there is a bijection $\sigma:U\to V$
such that $\G[\sigma](s) = t$.

It is easy to check that for all bijections $\sigma: U\to V$ and
$\tau: V \to W$ we have
$\G[\sigma \circ \tau] = \G[\sigma]\circ \G[\tau]$. And, for the
identity $\Id_U:U\to U$, it holds that $\G[\Id_U] = \Id_{\G[U]}$.

A \emph{species} is a rule which

\begin{itemize}
\tightlist
\item
  produces, for each finite set $U$, a finite set $F[U]$;
\item
  produces, for each bijection $\sigma:U\to V$, a function
  $F[\sigma]:F[U]\to F[V]$.
\end{itemize}

These data satisfy the following conditions: $F[\tau\circ\sigma] = F[\tau] \circ F[\sigma]$ for
all bijections $\sigma: U\to V$ and $\tau: V\to W$, and
$F[\Id_U] = \Id_{F[U]}$ for the identity map $\Id_U:U\to U$. For
instance, $\G$ as defined above is a species.

While this definition of species is adequate, the natural setting for
species is category theory. A \emph{category} consists of
the following data:
\begin{itemize}
\tightlist
\item
  A collection of \emph{objects}: $U$, $V$, $W$, \dots
\item
  A collection of \emph{arrows}: $\sigma$, $\tau$, $\rho$, \dots,
  each having a \emph{domain} and a \emph{codomain}; writing
  $\sigma: U\to V$ we express that $U$ is the domain and $V$ is the
  codomain of $\sigma$.
\item
  For each pair of arrows $\sigma: U\to V$ and $\tau: V\to W$, a
  composite arrow $\tau\circ\sigma:U\to W$.
\item
  For each object $U$, an identity arrow $\Id_U:U\to U$.
\end{itemize}

These data satisfy the following axioms:

\begin{itemize}
\item
  Associativity: for $\sigma$ and $\tau$ as above and $\rho$ an
  arrow with domain $W$ we have
  $\rho\circ (\tau\circ\sigma) = (\rho\circ \tau)\circ\sigma$.
\item
  Identity laws: for $\sigma$ as above,
  $\sigma \circ \Id_U = \sigma = \Id_V \circ\hspace{1pt}\sigma$.
\end{itemize}

Let us now give three examples of categories, the latter two of which
are essential to the definition of a species. The category $\Set$ has
sets as objects and functions as arrows; the identity arrow $\Id_U$ is
the identity function on $U$. The category $\FinSet$ is defined
similarly but has only {\it finite} sets as objects.  The category
$\CoreFinSet$ has finite sets as objects and bijections as arrows.

Morphisms of categories are called a functors. Eilenberg and Mac
Lane stress their importance in {\it General theory of
natural equivalences}~\cite{Eilenberg1945}:

\begin{quote}
It should be observed first that the whole concept of a category is
essentially an auxiliary one; our basic concepts are essentially those
of a functor and of a natural transformation {[}\,\dots{]}
\end{quote}

The definition goes as follows: Let $\C$ and $\D$ be categories. A
\emph{functor} $F:\C\to \D$ is a mapping of objects to objects and
arrows to arrows such that

\begin{itemize}
\tightlist
\item
  If $\sigma:U\to V$ then $F[\sigma]:F[U]\to F[V]$;
\item
  $F[\tau\circ\sigma] = F[\tau]\circ F[\sigma]$;
\item
  $F[\Id_U]=\Id_{F[U]}$.
\end{itemize}

Armed with these notions from category theory a \emph{species} is simply
a functor $F:\CoreFinSet\to \FinSet$. This definition was first given
by Joyal in 1981~\cite{Joyal1981}. An element $s\in F[U]$ is called an
\emph{$F$-structure} on $U$. Note that if $F$ is a species and
$\sigma:U\to V$ is a bijection, then
$\Id_{F[U]} = F[\Id_U] = F[\sigma^{-1}\circ\sigma] = F[\sigma^{-1}]\circ F[\sigma]$
and, similarly, $\Id_{F[V]} = F[\sigma]\circ F[\sigma^{-1}]$. Thus,
the transport of structure is always a bijection and a species can also
be regarded as a functor $\CoreFinSet\to \CoreFinSet$. In particular,
if $U$ and $V$ are two finite sets of the same cardinality
$|U|=|V|$, and $\sigma:U\to V$ is a bijection witnessing this, then
$F[\sigma]:F[U]\to F[V]$ is a bijection, proving that
$|F[U]|=|F[V]|$. In other words, the number of $F$-structures on
$U$ only depends on the number of elements of $U$.

For ease of notation, we write $F[n]$ for $F[[n]]$, the set of
$F$-structures on $[n]=\{1,2,\dots,n\}$. The \emph{generating
series} of $F$ is the formal power series
\[F(x) = \sum_{n\geq 0} |F[n]|\frac{x^n}{n!}.
\]
Two $F$-structures $s\in F[U]$ and $t\in F[V]$ are
\emph{isomorphic}, and we write $s\sim t$, if there is a bijection
$\sigma:U\to V$ such that $F[\sigma](s) = t$. Let $F[U]/{\sim}$
denote the set of equivalence classes under this relation. An unlabeled
$F$-structure is then simply a member of $F[U]/{\sim}$. Two
isomorphic $F$-structures are also said to have the same
\emph{isomorphism type} and the \emph{type generating series} of $F$
is
\[\typ{F}(x) = \sum_{n\geq 0} |F[n]/{\sim}|x^n.
\]

We have only seen one concrete species so far, that of simple graphs. Another
example is the species $L$ of linear orders. For a finite set $U$
with $n$ elements, let $L[U]$ consist of all bijections
$f:[n]\to U$. For instance, if we represent $f$ by its images
$f(1)f(2)\dots f(n)$ then $L[3]=\{123,132,213,231,312,321\}$.
Transport of structure is defined by $L[\sigma](f) = \sigma \circ f$,
or in terms of images
$L[\sigma](f(1)\dots f(n)) = \sigma(f(1))\dots \sigma(f(n))$. Clearly,
there are $n!$ linear orders on $[n]$. Thus $|L[n]|=n!$ and
$L(x) = 1/(1-x)$. Any two linear orders $f,g\in L[U]$ are isomorphic.
Indeed, $\sigma = g\circ f$ defines a bijection from $U$ to $U$
and $g = L[\sigma](f)$. Thus $|L[n]/{\sim}|=1$ and
$\typ{L}(x) = 1/(1-x)$.

Let $E$ be the species of sets. It is defined by $E[U]=\{U\}$ and,
for bijections $\sigma:U\to V$, $E[\sigma](U)=V$. That is, for any
finite set $U$ there is exactly one $E$-structure, the set $U$
itself. Thus $E(x)=e^x$ and $\typ{E}(x)=1/(1-x)$.

We can build new species from existing species using certain operations.
The simplest operation is addition, which is just disjoint union. An
$(F+G)$-structure is thus either an $F$-structure or a
$G$-structure:
\[
  \begin{tikzpicture}[semithick, scale=0.7]
    \draw[rounded corners] (0,0) rectangle (3,2);
    \foreach \x/\y in
      {0.35/1.64,0.26/0.3,0.9/1.7,1.16/0.3,0.4/1,2.4/1.2,2.0/1.7,2.3/0.4,2.6/0.45} {
      \filldraw[black!40!green] (\x,\y) circle (1.4pt);
    };
    \node at (1.5,1) {$F+G$};
    \node at (3.5,1) {$=$};
    \foreach \x/\y in
      {0.35/1.64,0.26/0.3,0.9/1.7,1.16/0.3,0.4/1,2.4/1.2,2.0/1.7,2.3/0.4,2.6/0.45} {
      \filldraw[black!40!green] (4+\x,\y) circle (1.4pt);
    };
    \node at (5.5,1) {$F$};
    \draw[rounded corners] (4,0) rectangle +(3,2);
    \node at (7.5,1) {or};
    \foreach \x/\y in
      {0.35/1.64,0.26/0.3,0.9/1.7,1.16/0.3,0.4/1,2.4/1.2,2.0/1.7,2.3/0.4,2.6/0.45} {
      \filldraw[black!40!green] (8+\x,\y) circle (1.4pt);
    };
    \node at (9.5,1) {$G$};
    \draw[rounded corners] (8,0) rectangle +(3,2);
  \end{tikzpicture}
\]
That is, for any finite set $U$ we define
$(F+G)[U] = F[U] \sqcup G[U]$. For any bijection $\sigma:U\to V$,
transport of structure is defined by $(F+G)[\sigma](s) = F[\sigma](s)$
if $s\in F[U]$ and $(F+G)[\sigma](s) = G[\sigma](s)$ if
$s\in G[U]$.

We can also form the product of two species. An $(F\cdot G)$-structure
on a set $U$ is a pair $(s,t)$, where $s$ is an $F$-structure on
a subset $U_1$ of $U$ and $t$ is a $G$-structure on the
remaining elements $U_2=U\setminus U_1$:
\[
  \begin{tikzpicture}[semithick, scale=0.7]
    \draw[rounded corners] (0,0) rectangle (3,2);
    \foreach \x/\y in
      {0.35/1.64,0.26/0.3,0.9/1.7,1.16/0.3,0.4/1,2.4/1.2,2.0/1.7,2.3/0.4,2.6/0.45} {
      \filldraw[black!40!green] (\x,\y) circle (1.4pt);
    };
    \node at (1.5,1) {$F\cdot G$};
    \node at (3.5,1) {$=$};
    \foreach \x/\y in
      {0.35/1.64,0.26/0.3,0.9/1.7,1.16/0.3,0.4/1,2.4/1.2,2.0/1.7,2.3/0.4,2.6/0.45} {
      \filldraw[black!40!green] (4+\x,\y) circle (1.4pt);
    };
    \draw [zigzag] (5.45,0) -- (5.7,2);
    \node at (4.9,1) {$F$};
    \node at (6.04,1) {$G$};
    \draw[rounded corners] (4,0) rectangle +(3,2);
  \end{tikzpicture}
\]
Formally, $(F\cdot G)[U] = \bigsqcup (F[U_1] \times G[U_2])$ in
which the union is over all pairs $(U_1,U_2)$ such that
$U = U_1\cup U_2$ and $U_1\cap U_2 = \emptyset$. The transport of
structure is defined by
$(F\cdot G)[\sigma](s,t) = \bigl(F[\sigma_1](s), G[\sigma_2](t)\bigr)$,
where $\sigma_1 = \sigma|_{U_1}$ is the restriction of $\sigma$ to
$U_1$ and $\sigma_2 = \sigma|_{U_2}$ is the restriction of
$\sigma$ to $U_2$.

It is clear that if $F$ and $G$ are species, then
$(F+G)(x) = F(x)+G(x)$. It also holds that $(FG)(x) = F(x)G(x)$. To
see this, recall that, for any species $F$ and any finite set $U$,
the number $|F[U]|$ of $F$-structures on $U$ only depends on
$|U|$. Thus, the coefficient in front of $x^n/n!$ in $(FG)(x)$ is
\[
  \big|(FG)[n]\big|
    = \biggl|\bigsqcup_{S\subseteq [n]} F\big[S\big] \times G\big[[n]\setminus S\big]\,\biggr|
    = \sum_{k=0}^n\binom{n}{k}\big|F[k]\big|\,\big|G[n-k]\big|,
\]
which is also the coefficient in front of $x^n/n!$ in $F(x)G(x)$.
For the type generating series one can similarly show that
$(\typ{F+G})(x)=\typ{F}(x)+\typ{G}(x)$ and
$(\typ{FG})(x)=\typ{F}(x)\typ{G}(x)$.

As a simple example, multiplying the set species with itself we get
$E^2[U] = \bigl\{(A, U\setminus A): A\subseteq U \bigr\}$. This is
called the subset (or power set) species, and is denoted $\Pow$. Note
that $\Pow(x) = E(x)^2 = e^{2x} = \sum_{n\geq 0}2^n x^n/n!$,
reflecting the elementary fact that there are $2^n$ subsets of an
$n$-element set. Also,
$\typ{\Pow}(x) = 1/(1-x)^2 = \sum_{n\geq 0}(n+1)x^n$, reflecting that
the isomorphism type of a set is determined by its cardinality
and that an $n$-element set has subsets of $n+1$ different cardinalities.

Another, perhaps more interesting, example is that of derangements:
Define the species $\Sym$ of permutations as follows. For any finite
set $U$, let $\Sym[U]$ be the set of bijections $f: U\to U$; and,
for any bijection $\sigma: U\to V$, define the transport of structure
by $\Sym[\sigma](f)=\sigma\circ f\circ \sigma^{-1}$. A
\emph{derangement} is a permutation without fixed points. Let $\Der$
denote the species of derangements, so that
$\Der[U] = \{f\in \Sym[U]: \text{$f(x)\neq x$ for all $x\in U$}\}$
with transport of structure defined the same way as for the species
$\Sym$. Note that any permutation on $U$ can be seen as a set of
fixed points on a subset $U_1$ of $U$ together with a derangement of
the remaining elements $U_2=U\setminus U_1$. This is most apparent
when writing a permutation in disjoint disjoint cycle form. For
instance, the permutation $(1)(2\, 7\, 3)(4)(5)(6\, 9)(8)$ can be
identified with a the pair consisting of the fixed point set
$\{1,4,5,8\}$ together with the derangement $(2\, 7\, 3)(6\, 9)$.
Thus $\Sym = E\cdot \Der$ and $\Sym(x) = E(x)\Der(x)$. It
immediately follows that \begin{align*}
    \Der(x)
    = \Sym(x)\cdot E(x)^{-1}
    &= \frac{1}{1-x}\cdot e^{-x} \\
    &=
      \sum_{n\geq 0}n!\left(1 - \frac{1}{1!} + \frac{1}{2!} - \dots + \frac{(-1)^n}{n!}
      \right)\frac{x^n}{n!}.
\end{align*} In particular, $|Der[n]|/n! \to e^{-1}$ as
$n\to \infty$, which the reader may recognize as the answer to
Montmort's (1713) hat-check problem~\cite{Montmort1713}.

Define the singleton species $X$ by $X[U] = \{U\}$ if $|U|=1$ and
$X[U] = \emptyset$ otherwise. Similarly, define the species $1$,
characteristic of the empty set, by $1[U] = \{U\}$ if $U=\emptyset$
and $1[U] = \emptyset$ otherwise. The combinatorial equality
$L=1+XL$ informally states that a linear order is either empty or it
consists of a first element together with the remaining elements. A
computer scientist would recognize this as the same recursive pattern as
in the definition of a singly linked list of nodes: it is either the
null reference or it is a pair consisting of a data element and a
reference to the next node in the list. To make formal sense of this
combinatorial equality we first need to define what it means
for two species to be equal.

Two species $F$ and $G$ are \emph{(combinatorially) equal} if there
is a family of bijections $\alpha_U:F[U]\to G[U]$ such that for any
bijection $\sigma:U\to V$ the diagram
\[
  \begin{tikzpicture}[semithick]
    \matrix (m) [matrix of math nodes, row sep=4em, column sep=6em]
    { F[U] & G[U] \\
      F[V] & G[V] \\ };
    \path[->,font=\scriptsize]
      (m-1-1) edge node[above] {$\alpha_U$} (m-1-2)
              edge node[left]  {$F[\sigma]$} (m-2-1)
      (m-2-1) edge node[above] {$\alpha_V$} (m-2-2)
      (m-1-2) edge node[right] {$G[\sigma]$} (m-2-2);
  \end{tikzpicture}
\]
commutes. That is, for any $F$-structure $s$ on $U$ we have
$G[\sigma](\alpha_U(s)) = \alpha_V(F[\sigma](s))$. In category theory
this is called natural isomorphism of functors.

To prove that $L=1+XL$ we define the family of bijections
$\alpha_U:L[U]\to (1+XL)[U]$ by $\alpha_\emptyset(\eps) = \emptyset$
and $\alpha_U(a_1 a_2 \cdots a_n) = (a_1,\, a_2\cdots a_n)$ for
$U\neq \emptyset$. In the case $U=\emptyset$ the diagram above
trivially commutes and for any nonempty finite set $U$ we have
\[
  \begin{tikzpicture}[semithick]
    \matrix (m) [matrix of math nodes, row sep=3em, column sep=4em, minimum height=2em]
    { \; a_1 a_2 \cdots a_n\, & \,(a_1,\; a_2\cdots a_n)\; \\
      \sigma(a_1)\sigma(a_2)\cdots \sigma(a_n)
      & (\sigma(a_1),\; \sigma(a_2)\cdots \sigma(a_n)) \\ };
    \path[->,font=\scriptsize]
      (m-1-1) edge[|->] node[above] {$\alpha_U$} (m-1-2)
              edge[|->] node[left=3pt] {$L[\sigma]$} (m-2-1)
      (m-2-1) edge[|->] node[above] {$\alpha_V$} (m-2-2)
      (m-1-2) edge[|->] node[right=3pt] {$(1+XL)[\sigma]$} (m-2-2);
  \end{tikzpicture}
\]
which concludes the proof of $L = 1+XL$. From this it immediately
follows that $L(x) = 1 + xL(x)$ and we rediscover the identity
$L(x) = 1/(1-x)$. Similarly, $\typ{L}(x) = 1 + x\typ{L}(x)$ and we
also rediscover that $\typ{L}(x) = 1/(1-x)$.

Next we shall define substitution of species. As a motivating example,
consider the species $\Sym$ of permutations. For instance,
$(1\, 7)(2\, 8\, 3\, 4)(5\, 9)(6)$ is an element of $\Sym[9]$
written in disjoint cycle form. Here we have listed the cycles in
increasing order with respect to the smallest element in each cycle, but
this is just a convention; since the cycles are disjoint they commute
and we may list them in any order. In other words, any permutation can
be represented by a \emph{set} of cycles. Using species this can be
expressed as $\Sym = E\circ \Cyc = E(\Cyc)$, in which a
$\Cyc$-structure is a permutation with a single
cycle.

Another example of substitution of species is that of set partitions;
this species is denoted $\Par$. Define $E_+$ as the species of
nonempty sets, so that $E=1+E_+$. Then $\Par = E(E_+)$, which we can
read as stating that a partition is a set of nonempty sets. Similarly,
the species of \emph{ballots}, also called ordered set partitions, is
defined by $\Bal = L(E_+)$.

In general, we can think of an $F(G)$-structure as a generalized
partition in which each block of the partition carries a
$G$-structure, and the blocks are structured by $F$:
\[
  \begin{tikzpicture}[semithick, scale=0.8]
    \draw[rounded corners] (0,0) rectangle (3,2);
    \foreach \x/\y in
      {0.35/1.64,0.26/0.3,0.9/1.7,1.16/0.3,0.4/1,2.4/1.2,2.0/1.7,2.3/0.4,2.6/0.45} {
      \filldraw[black!40!green] (\x,\y) circle (1.4pt);
    };
    \node at (1.5,1) {$F\circ G$};
    \node at (3.5,1) {$=$};
    \foreach \x/\y in
      {0.35/1.64,0.26/0.3,0.9/1.7,1.16/0.3,0.4/1,2.4/1.2,2.0/1.7,2.3/0.4,2.6/0.45} {
      \filldraw[black!40!green] (4+\x,\y) circle (1.4pt);
    };
    \draw [zigzag] (5.45,0) -- (5.85,2);
    \draw [zigzag] (4,0.44) -- (5.7,1.5);
    \draw [zigzag] (5.65,1.07) -- (7,0.8);
    \node at (4.65,1.33) {\small $G$};
    \node at (4.78,0.55) {\small $G$};
    \node at (6.60,1.6) {\small $G$};
    \node at (6.00,0.4) {\small $G$};
    \node at (5.50,1.14) {\Large $F$};
    \draw[rounded corners] (4,0) rectangle +(3,2);
  \end{tikzpicture}
\]
Formally, we define the \emph{substitution} of $G$ in $F$, also
called \emph{partitional composition}, as follows. For species $F$ and
$G$, with $G[\emptyset]=\emptyset$,
\[(F\circ G)[U] \,=
  \bigsqcup_{\substack{\beta = \{B_1,B_2,\dots,B_k\} \\ \text{partition of }U}}
  F[\beta]\times G[B_1]\times G[B_2] \times\cdots\times G[B_k].
\]
Transport of structure is defined in a natural way, but there are
quite a few details to keep track of. To a first approximation, an
$(F\circ G)$-structure on $U$ is a tuple $(s,t_1,\dots,t_k)$,
where $s\in F[\beta]$ and $t_i\in G[B_i]$. To guarantee that the
union is disjoint we will, however, also ``tag'' these tuples with their
associated set partition. Hence a typical $(F\circ G)$-structure is of
the form $(\beta,s,t_1,\dots,t_k)$.
To describe the image of this tuple under the transport of structure we
first need some auxiliary definitions.  Assume that $\sigma:U\to V$ is a
bijection.  Let $\bar\beta=\{{\bar B}_1,\dots,{\bar B}_k\}$ be the set
partition of $V$ obtained from $\beta$ by transport of structure. That is, with
$\bar\sigma = \Par[\sigma]$ we have $\bar\beta
=\bar\sigma(\beta)$. Further, let
$\bar s = F[\bar\sigma](s)\in F[\bar\beta]$ and
${\bar t}_i = G[\sigma_i](t_i)\in G[{\bar B}_i]$, where
$\sigma_i = \sigma|_{B_i}$ is the restriction of $\sigma$ to the block
$B_i$. Then
\[
(\bar\beta,\bar s,\bar{t}_1,\dots,\bar{t}_k) = (F\circ G)[\sigma](\beta,s,t_1,\dots,t_k).
\]

Concerning generating series, one can show that $F(G)(x)=F(G(x))$. As
an example, recall that $\Sym = E\circ \Cyc$ and hence
$\Sym(x) = \exp(\Cyc(x))$. Thus
\[\Cyc(x) = \log(\Sym(x)) = \log\,(1-x)^{-1}
= \sum_{n\geq 1} \frac{x^n}{n}
= \sum_{n\geq 1} (n-1)!\frac{x^n}{n!},
\]
which is unsurprising since there clearly are $(n-1)!$ ways to make a
cycle using $n$ distinct elements.

For type generating series the situation is more complicated. The
series $\typ{F(G)}(x)$ is, as a rule, different from
$\typ{F}(\typ{G}(x))$. To state the true relation between these
entities a more general generating series is needed. The \emph{cycle
index series} for a species $F$ is defined by \begin{equation*}
  Z_{F}(x_1,x_2,x_3,\dots) =
    \sum_{n\geq 0}\,\frac{1}{n!}
    \sum_{\sigma\in\Sym[n]}|\Fix F[\sigma]|\,
      x_1^{c_1(\sigma)}x_2^{c_2(\sigma)}x_3^{c_3(\sigma)}\cdots
\end{equation*} where
$\Fix F[\sigma] = \{ s \in F[n]: F[\sigma](s) = s \}$ is the set of
$F$-structures on $[n]$ fixed by $F[\sigma]$, and $c_i(\sigma)$
denotes the number of cycles of length $i$ in $\sigma$. To
illustrate this definition we give three examples:

\emph{Example 1}. Consider the species $X$ of singletons. The fixed
point set $\Fix X[\sigma]$ is empty unless $n=1$ and there is only
one permutation on $[1]=\{1\}$, the identity, so the cycle index
series consists of a single term:
\begin{align*}
  Z_X(x_1, x_2, \dots)
    &= |\Fix X[\Id_1]|\,x_1^{c_1(\Id_1)} x_2^{c_2(\Id_1)}\dots = x_1.
\end{align*}

\emph{Example 2}. For any species $F$ we denote by $F_n$ the
restriction of $F$ to sets of cardinality $n$. In other words,
$F_n[U] = F[U]$ if $|U|=n$ and $F_n[U] = \emptyset$ otherwise.
With this notation the singleton species $X$ is identical to the
species $E_1$. Let us now calculate the cycle index series for
$E_2$, the species of sets of cardinality 2:
\begin{align*}
  Z_{E_2}(x_1, x_2, \dots)
    &= \frac{1}{2}\left(|\Fix E_2[\Id_2]|\,x_1^{c_1(\Id_2)} x_2^{c_2(\Id_2)} +
       |\Fix E_2[(1\,2)]|\,x_1^{c_1(1\,2)} x_2^{c_2(1\,2)}\right) \\
    &= \frac{1}{2}\left(x_1^2 + x_2\right).
\end{align*}

\emph{Example 3}. For the species $L$ of linear orders,
$\Fix L[\sigma]$ is empty unless $\sigma$ is the identity map, in
which case $\Fix L[\sigma] = L[n]$. Thus
\begin{align*}
  Z_L(x_1, x_2, \dots)
    &= \sum_{n \geq 0}\frac{1}{n!}|\Fix L[\Id_n]|\, x_1^{c_1(\Id_n)} x_2^{c_2(\Id_n)}\dots \\
    &= \sum_{n \geq 0}\frac{1}{n!}|L[n]| x_1^{n} = \frac{1}{1-x_1}.
\end{align*}

It should be noted that the cycle index series encompasses both the
generating series and the type generating series of a species. To be
more precise, for any species $F$,
\[
F(x) = Z_F(x,0,0,\dots) \;\text{ and }\;
\typ{F}(x) = Z_F(x,x^2,x^3,\dots).
\]
Let us prove this. For the generating series we have
\begin{align*}
  Z_F(x,0,0,\dots)
      &= \sum_{n \geq 0} \frac{1}{n!}\Biggl(\sum_{\,\sigma \in \Sym[n]}
        |\Fix F[\sigma]| x^{c_1(\sigma)} 0^{c_2(\sigma)} 0^{c_3(\sigma)}\!\dots\Biggr)\\
      &= \sum_{n \geq 0}\frac{1}{n!} |\Fix F[\Id_n]| x^n
      = \sum_{n \geq 0} |F[n]| \frac{x^n}{n!}
      = F(x).
\end{align*}
For the type generating series we have
\begin{align*}
  Z_F(x,x^2,x^3,\dots)
      &= \sum_{n \geq 0} \frac{1}{n!} \sum_{\sigma \in \mathcal{S}_n} |\Fix F[\sigma]| x^{c_1(\sigma)+2c_2(\sigma)+3c_3(\sigma)+\cdots} \\
      &= \sum_{n \geq 0} \frac{1}{n!} \sum_{\sigma \in \mathcal{S}_n} |\Fix F[\sigma]| x^n
      = \sum_{n \geq 0} |F[n]/\mathord{\sim}| x^n = \typ{F}(x)
\end{align*}
by Burnside's lemma.

Further, for species $F$ and $G$ we have $Z_{F+G} = Z_F+ Z_G$,
$Z_{F\cdot G} = Z_F \cdot Z_G$, and
\[
  Z_{F\circ G}(x_1, x_2, \dots) = Z_F(Z_G(x_1,x_2,\dots), Z_G(x_2,x_4,\dots), \dots).
\]
In particular, $(F\circ G)(x) = F(G(x))$ while
$\typ{F\circ G}(x) = Z_F(\typ{G}(x), \typ{G}(x^2), \typ{G}(x^3), \dots)$.
As an example, consider the species $F=E_2\circ (X+X^2)$ and
isomorphism types of $F$-structures. For each of the sizes 2, 3, and 4
there is exactly one type: using pictures, those are
$\{\bullet,\bullet\}$, $\{\bullet,\bullet\bullet\}$, and
$\{\bullet\bullet,\bullet\bullet\}$. There is no type of any other
size and thus the type generating series is $\typ{F}(x)=x^2+x^3+x^4$.
Note that the type generating series for $E_2$ and $X+X^2$ are
$x^2$ and $x+x^2$, and naively composing them we get
$x^2+2x^3+x^4$, which is different from $\typ{F}(x)$. The correct
procedure to calculate $\typ{F}(x)$ is
\begin{align*}
  \typ{F}(x)
  &= Z_{E_2}\big(\typ{(X+X^2)}(x), \typ{(X+X^2)}(x^2), \dots\big) \\
  &= Z_{E_2}\big(x+x^2, x^2+x^4\big)
  = \frac{1}{2}\Big(\big(x+x^2\big)^2 + x^2+x^4\Big) = x^2+x^3+x^4.
\end{align*}

Changing the topic, but continuing with the species
$F=E_2\circ (X+X^2)$, say that we want to refine the count of
$F$-structures by keeping track of how many ordered pairs, stemming
from the $X^2$ term, such a structure contains. We can accomplish this
by putting a ``weight'', say $y$, on each $X^2$-structure. The
type generating series for $F$ would in this case be
$x^2 + yx^3 + y^2x^4$. Two more
interesting examples of this kind of refinement would be
to count permutations while
keeping track of the number of cycles, or to count trees while keeping
track of the number of leaves. There is a variant of combinatorial
species, called weighted species, that allows us to keep track of
various parameters. Before giving its definition we'll need to introduce
weighted sets and their morphisms.

Let $R$ be a ring. An \emph{$R$-weighted set} is a pair $(A,w)$,
where $A$ is a finite set and $w:A\to R$ is a function. We think of
$w$ as assigning a weight to each element of $A$. A \emph{morphism
of $R$-weighted sets} $(A,w)$ and $(B,v)$ is a
\emph{weight-preserving} function $\vphi:A\to B$; that is,
$w = v \circ \vphi$, or, equivalently, the diagram
\[
  \begin{tikzpicture}[semithick]
    \matrix (m) [matrix of math nodes, row sep=2.5em, column sep=3em]
    { A &   & B \\
        & R \\ };
    \path[->,font=\scriptsize]
      (m-1-1) edge node[auto] {$\vphi$} (m-1-3)
      (m-1-1) edge node[below left] {$w$} (m-2-2)
      (m-1-3) edge node[below right] {$v$} (m-2-2);
  \end{tikzpicture}
\]
commutes. If, in addition, $\psi$ is a morphism of $R$-weighted
sets from $(B,v)$ to $(C,u)$ then it's easy to check that the
composition $\psi\circ\vphi$ is a morphism from $(A,w)$ to
$(C,u)$. The identity map is of course also weight-preserving and so
we have all the ingredients of a category. More generally, given
categories $\A$ and $\D$, an object $D\in\D$ and a functor
$F:\A\to \D$, the \emph{slice category} $(F/D)$ is defined as
follows: Objects are pairs $(A,w)$ with $A\in\A$ and $w:F[A]\to D$
in $\D$. Arrows $(A,w)\to (A',w')$ are arrows $\vphi:A\to A'$
making the following diagram commute:
\[
  \begin{tikzpicture}[semithick]
    \matrix (m) [matrix of math nodes, row sep=2.5em, column sep=3em]
    { F[A] &   & F[A'] \\
           & D &       \\ };
    \path[->,font=\scriptsize]
      (m-1-1) edge node[above] {$F[\vphi]$} (m-1-3)
      (m-1-1) edge node[below left] {$w$} (m-2-2)
      (m-1-3) edge node[below right] {$w'$} (m-2-2);
  \end{tikzpicture}
\]
Recall that $\Set$ is the category of sets and functions and that
$\FinSet$ is the category of finite sets and functions.  By slight abuse
of notation, let the inclusion functor from $\FinSet$ to $\Set$ taking
objects and arrows to themselves also be denoted by $\FinSet$.  In terms
of the definition of a slice category, let $F=\FinSet$, $\A = \FinSet$,
and $\D = \Set$. Assuming that we for the moment identify the ring $R$ with its
underlying set, forgetting about the two operations on $R$, we
also let $D=R$. Then $(\FinSet/R)$ is the (slice) category of
$R$-weighted sets with morphisms of $R$-weighted sets as arrows, and we
define an \emph{$R$-weighted species} as a functor
$F:\CoreFinSet\to(\FinSet/R)$.

As an example, let us define a $\ZZ[y]$-weighted version of the
species $\Sym$ of permutations. Define the weight of $f\in \Sym[U]$
as $w_U(f)=y^{\cyc(f)}$, where $\cyc(f)$ denotes the number of
cycles in $f$, and let $\Sym_w$ denote the resulting weighted
species. To be more specific,

\begin{itemize}
\tightlist
\item
  for any finite set $U$, let $\Sym_w[U]$ be the $\ZZ[y]$-weighted
  set $\big(\Sym[U], w_U\big)$;
\item
  for any bijection $\sigma:U\to V$, let
  $\Sym_w[\sigma]:\Sym_w[U]\to \Sym_w[V]$ be the morphism of
  $\ZZ[y]$-weighted sets defined by
  $\Sym[\sigma]:\Sym[U]\to \Sym[V]$.
\end{itemize}

Recall that the transport of structure $\Sym[\sigma]$ is conjugation
by $\sigma$; that is,
$\Sym[\sigma](f)=\sigma\circ f\circ \sigma^{-1}$. It is well known
that two permutations are conjugate if and only if they have the same
cycle structure. In particular, conjugate permutations have the same
number of cycles. Thus, $\Sym[\sigma]$ is weight-preserving, or,
equivalently, the diagram
\[
  \begin{tikzpicture}[semithick]
    \matrix (m) [matrix of math nodes, row sep=2.5em, column sep=3em]
    { \Sym[U] &        & \Sym[V] \\
              & \ZZ[y] &         \\
    };
    \path[->,font=\scriptsize]
      (m-1-1) edge node[auto] {$\Sym[\sigma]$} (m-1-3)
      (m-1-1) edge node[below left]  {$w_U$} (m-2-2)
      (m-1-3) edge node[below right] {$w_V$} (m-2-2);
  \end{tikzpicture}
  \vspace{-1.5ex}
\]
commutes.

For any weighted set $(A,w)$ let $|A|_w = \sum_{a\in A}w(a)$; it's
called the \emph{total weight} of $A$. Note that if $w$ is the
trivial weight function, assigning $1$ to each element, then
$|A|_w=|A|$. With this definition in hand, the power series associated
with a weighted species $F_w$ are defined as expected:
\begin{align*}
  Z_{F_w}(x_1,x_2,x_3,\dots) &=
    \sum_{n\geq 0}\,\frac{1}{n!}
    \sum_{\sigma\in\Sym[n]}|\Fix F[\sigma]|_w\,
      x_1^{c_1(\sigma)}x_2^{c_2(\sigma)}x_3^{c_3(\sigma)}\cdots, \\
  F_w(x) = Z_{F_w}(x,0,0,\dots) &= \sum_{n\geq 0} |F[n]|_w\frac{x^n}{n!},\\
  \typ{F}_w(x) = Z_{F_w}(x,x^2,x^3,\dots) &= \sum_{n\geq 0} |F[n]/{\sim}|_w x^n,
\end{align*}
where $|F_w[U]/{\sim}|_w$ is the total weight of any
member of $F_w[U]/{\sim}$. Since $F_w[\sigma]$ is weight-preserving
the choice of representative is immaterial.

It is straightforward to extend addition, multiplication, and
composition to weighted species. All we need to do is define disjoint
union and (Cartesian) product of weighted sets: If $(A,w)$ and
$(B,v)$ are weighted sets, then
$(A,w)\sqcup (B,v) = (A\sqcup B, w\sqcup v)$ and
$(A,w)\times (B,v) = (A\times B, w\times v)$, where $(w\sqcup v)(x)$
is $w(x)$ or $v(x)$ depending on whether $x\in A$ or $x\in B$,
and $(w\times v)(x,y) = w(x)v(y)$. With these definitions we have
$|A\sqcup B|_{w\sqcup v} = |A|_w + |B|_v$ and
$|A\times B|_{w\times v} = |A|_w|B|_v$.

Returning to the $\Sym_w$ example, we find that $\Sym_w=E(y\Cyc)$,
where $y\Cyc$ is suggestive notation for the weighted species of
cycles in which each cycle is given the weight $y$. Thus
$\Sym_w(x)=\exp(y\Cyc(x)) =\exp(y\log (1-x)^{-1}) = (1-x)^{-y}$ and
the coefficient in front of $y^kx^n/n!$ in $\Sym_w(x)$ is
an unsigned Stirling number of the first kind. That is,
$$
  \Sym_w(x) = \sum_{n\geq 0}\left(\sum_{k=1}^n c(n,k)y^k\right)\frac{x^n}{n!},
$$
where $c(n,k)$ is the number of permutations of $[n]$ with $k$ cycles.

As a more complex example we shall now calculate the cycle index series of
the set species. We have $|\Fix E[\sigma]|=1$ for every
$\sigma\in\Sym[n]$ and so
\[
    Z_{E}(x_1,x_2,x_3,\dots) =
      \sum_{n\geq 0}\,\frac{1}{n!}
      \sum_{\sigma\in\Sym[n]}x_1^{c_1(\sigma)}x_2^{c_2(\sigma)}x_3^{c_3(\sigma)}\cdots
\]
Or, rewriting this slightly,
\[
    Z_{E}(x_1,x_2,x_3,\dots) =
    \left[\,
      \sum_{n\geq 0}\,\left(
      \sum_{\sigma\in\Sym[n]}x_1^{c_1(\sigma)}x_2^{c_2(\sigma)}x_3^{c_3(\sigma)}\cdots
      \right)\!\frac{x^n}{n!}\,
    \right]_{x=1}.
\]
Inside the outermost bracket is an exponential generating series for
permutations in which each cycle of length $i$ carries the weight
$x_i$. Next, we recast this example in terms of weighted species. Let
$\Sym_w$ be the $\ZZ[x_1,x_2,\dots]$-weighted species of
permutations in which
$w(\sigma) = x_1^{c_1(\sigma)}x_2^{c_2(\sigma)}\cdots$ for each
permutation $\sigma$. Then $Z_{E}(x_1,x_2,\dots) = \Sym_w(1)$. On
the other hand, we have $\Sym_w = E(x_1\Cyc_1 + x_2\Cyc_2 +\cdots)$ in
which $x_i\Cyc_i$ is the species of cycles of length $i$, each with
weight $x_i$. Thus
$\Sym_w(x) = \exp\bigl(\frac{x_1x}{1}+\frac{x_2x^2}{2}+\frac{x_3x^3}{3} + \cdots\bigr)$
and setting $x=1$ we arrive at
\[
    Z_{E}(x_1,x_2,x_3,\dots)
    = \exp\!\left( \frac{x_1}{1}+\frac{x_2}{2}+\frac{x_3}{3} + \cdots \right).
\]
Let us use this expression to investigate the type generating series
of the set partition species $\Par$. The isomorphism type of a set
partition is determined by the sizes of its blocks; those sizes form an
integer partition and hence $\uPar(x) = \prod_{k\geq 1}(1-x^k)^{-1}$.
On the other hand, $\Par=E(E_+)$ which in terms of type generating
series means
\[
  \uPar(x) = Z_E(\uNonEmptyE(x),\uNonEmptyE(x^2),\uNonEmptyE(x^3),\dots).
\]
Clearly, $\uNonEmptyE(x) = x/(1-x)$ and we arrive at the intriguing
identity
\[
\prod_{k\geq 1}\frac{1}{1-x^k}
  = \exp\sum_{k\geq 1}\,\frac{1}{k}\,\frac{x^k}{1-x^k}.
\]

Let $\Par_w$ be the species of set partitions weighted by the number
of blocks: the weight function $w_U:\Par[U]\to \ZZ[y]$ is defined by
$w_U(\beta)=y^k$, where $\beta=\{B_1,\dots,B_k\}$ is a partition of
$U$ with $k$ blocks. Then $\Par_w = E(yE_+)$, in which $yE_+$
denotes the species of nonempty sets each with weight $y$. Thus
$\Par_w(x) = exp\big(y(e^x-1)\big)$, and the coefficient in front of
$y^kx^n/n!$ in $\Par_w(x)$ is a Stirling number of the second kind:
$$
  \Par_w(x) = \sum_{n\geq 0}\left(\sum_{k=1}^n S(n,k)y^k\right)\frac{x^n}{n!},
$$
where $S(n,k)$ is the number of set partitions of $[n]$ with $k$ blocks.
Transitioning to type generating series we also find that
\begin{equation}\label{eq:integer-partitions-with-k-blocks}
\prod_{k\geq 1}\frac{1}{1-yx^k}
  = \exp\sum_{k\geq 1}\,\frac{1}{k}\,\frac{y^k x^k}{1-x^k},
\end{equation}
which is a refined generating series for integer partitions in which
$y$ keeps track of the number of parts.

A two-sort species is a functor
$F:\CoreFinSet\times\CoreFinSet\to\FinSet$ in which
$\CoreFinSet\times\CoreFinSet$ denotes a product category. In general,
if $\A$ and $\B$ are two categories, then $\A\times\B$ has pairs
$(A,B)$ as objects, where $A$ is an object of $\A$ and $B$ is an
object of $\B$. Similarly, the arrows are pairs $(f,g)$, where $f$
and $g$ are arrows of $\A$ and $\B$, respectively. Composition of
arrows is componentwise and $\Id_{(A,B)} = (\Id_A, \Id_B)$. We shall
write $F(X,Y)$ to indicate that $F$ is a two-sort species and that
the two sorts are called $X$ and $Y$. For natural numbers $n$ and
$k$, let $F[n,k] = F[[n],[k]]$. We define the \emph{generating
series} of $F$ by
\[
  F(x,y) = \sum_{n,k\geq 0} \big|F[n,k]\big| \,\frac{x^n}{n!}\frac{y^k}{k!}.
\]
Note that any unisort species $F$ can be considered a two-sort
species by specifying of what sort it is: saying that $F$ is a species
of sort $X$ means that, as a two-sort species, $F[U,V] = F[U]$ if
$V$ is empty, and $F[U,V]=\emptyset$ otherwise. To be a species of
sort $Y$ is interpreted similarly.

Addition and multiplication of species is straightforwardly extended to
the two-sort context. As an example, consider the species
$F(X,Y)=(1+X)Y$, where $X$ is the species of singletons of sort
$X$ and $Y$ is the species of singletons of sort $Y$. If
$|U|\leq 1$ and $|V| = 1$ then there is exactly one $F$-structure,
namely the pair $(U,V)$; otherwise, there are no $F$-structures. For
instance, $F[\emptyset,\{1\}] = \{(\emptyset, \{1\})\}$ and
$F[\{1,2\},V] = \emptyset$. Similarly,
\[
\big(E(X)Y\big)[U,V] =
\begin{cases}
  \big\{(U,V)\big\}  & \text{if $|V|=1$}, \\
  \emptyset & \text{otherwise}.
\end{cases}
\]

It is less clear how to extend the substitution operation to the
two-sort context and so we detail it here. Let $\Par[U,V]$ denote the
set of partitions of the disjoint union $U\sqcup V$. For instance, if
$U=\{1,2\}$ and $V = \{a,b,c\}$, then
$\pi = \big\{\{1\},\{2,a,c\},\{b\}\big\}$ is a member of
$\Par[U,V]$. In the definition of $F(G,H)$ below, we view a block $B$
of such a partition as a pair of sets $(S,T)$, where
$S=B\cap U \subseteq U$ and $T=B\cap V\subseteq V$. Thus, the blocks
of the example partition, $\pi$, will be viewed as the pairs
$(\{1\}, \emptyset)$, $(\{2\}, \{a,c\})$, and $(\emptyset,\{b\})$.
We are now in a position to define the substitution of $G$ and $H$
in $F$, also called partitional composition. Let $F(X,Y)$,
$G(X,Y)$, and $H(X,Y)$ be two-sort species. Then
\[
  F(G,H) \,= \!\!\! \bigsqcup_{\substack{
    \pi\,\in\,\Par[U,V] \\
    \kappa:\,\pi\,\to\,\{X,Y\}
    }}\!\! F[\beta,\gamma]
       \times G[B_1]\times\dots\times G[B_k]
       \times H[C_1]\times\dots\times H[C_\ell],
\]
in which $\beta = \{B_1,\dots,B_k\} = \kappa^{-1}(X)$ and
$\gamma = \{C_1,\dots,C_\ell\} = \kappa^{-1}(Y)$. Here we can think of
$\kappa$ as assigning one of two colors to each block of $\pi$; the
partition $\beta$ consists of the $X$-colored blocks, and $\gamma$
consists of the $Y$-colored blocks. Intuitively, an
$F(G,H)$-structure is obtained from an $F$-structure by inflating
each element of sort $X$ into a $G$-structure and each element of
sort $Y$ into an $H$-structure.

Let us now consider the species $\Phi(X,Y)$ whose structures on
$(U,V)$ are all functions $f:U\to V$, and whose transport of
structure is defined by
$\Phi[\sigma, \tau](f)=\tau \circ f\circ\sigma^{-1}$.
We shall prove the combinatorial equality $\Phi(X,Y)=E(E(X)Y)$. Let
$\Psi(X,Y)=E(E(X)Y)$ denote the right-hand side and define
$\alpha_{UV}:\Phi[U,V]\to \Psi[U,V]$ by
\[\alpha_{UV}(f) = \big\{\big(f^{-1}(v), v\big): v\in V \big\}.
\]
To show that $\Phi(X,Y)=\Psi(X,Y)$ then amounts to proving that, for any pair
of bijections $\sigma:U\to U'$ and $\tau:V\to V'$, the diagram
\[
  \begin{tikzpicture}[semithick]
    \matrix (m) [matrix of math nodes, row sep=4em, column sep=6em]
    { \Phi[U,V] & \Psi[U,V] \\
      \Phi[U',V'] & \Psi[U',V'] \\ };
    \path[->,font=\scriptsize]
      (m-1-1) edge node[auto] {$\alpha_{UV}$} (m-1-2)
              edge node[left] {$\Phi[\sigma,\tau]$} (m-2-1)
      (m-2-1) edge node[auto] {$\alpha_{U'V'}$} (m-2-2)
      (m-1-2) edge node[auto] {$\Psi[\sigma,\tau]$} (m-2-2);
  \end{tikzpicture}
\]
commutes. By direct calculation we have
\begin{align*}
  \Psi[\sigma,\tau](\alpha_{UV}(f))
    &= \big\{\big((\sigma\circ f^{-1})(v), \tau(v)\big): v\in V \big\};\\
  \alpha_{U'V'}(\Phi[\sigma,\tau](f))
    &= \big\{\big((\sigma\circ f^{-1}\circ \tau^{-1})(v), v\big): v\in V' \big\}.
\end{align*}
Since $\tau$ is a bijection these two sets are clearly equal.

Let $F(X,Y)$ be a two-sort species. Two $F$-structures
$s\in F[U,V]$ and $t\in F[U',V']$ are said to have the same
\emph{isomorphism type}, and one writes $s \sim t$, if there are
bijections $\sigma: U\to U'$ and $\tau: V\to V'$ such that
$F[\sigma,\tau](s) = t$. The \emph{type generating series} of $F(X,Y)$ is
\[
  \typ{F}(x,y) = \sum_{n,k\geq 0} \bigl|F[n,k]/{\sim} \bigr|\, x^n y^k,
\]
where $F[n,k]/{\sim}$ denotes the set of equivalence classes of
$F[n,k]$ with respect to ${\sim}$.

Two $F$-structures $s\in F[U,V]$ and $t\in F[U',V']$ are said to
have the same \emph{isomorphism type according to the sort $X$}, and
one writes $s \sim_X t$, if $V=V'$ and there is a bijection
$\sigma:U\to U'$ such that $F[\sigma, \Id_V](s) = t$. Similarly,
$s$ and $t$ are said to have the same \emph{isomorphism type
according to the sort $Y$}, and one writes $s \sim_Y t$, if $U=U'$
and there is a bijection $\tau:V\to V'$ such that
$F[\Id_U,\tau](s) = t$. It's easy to see that
${\sim_X}\circ{\sim_Y}={\sim}={\sim_Y}\circ{\sim_X}$, in which the circle denotes
composition of relations, and thus ${\sim_X}$ and ${\sim_Y}$ refine (factor) ${\sim}$ in
a natural way.
Define $\typeX{s} = \{ t : t\sim_X s \}$ and
$\typeY{s} = \{ t : t\sim_Y s \}$ as the equivalence classes of $s$
with respect to $\sim_X$ and $\sim_Y$.

As an example, let us revisit the species $\Phi(X,Y)$ of
functions $f:U\to V$. For $f\in \Phi[U,V]$ and $g\in \Phi[U',V]$ we have
\begin{alignat*}{2}
  g\sim_X f
  &\iff \Phi[\sigma, \Id_V](g) = f &\quad&\text{ for some bijection } \sigma:U\to U' \\
  &\iff g = f\circ\sigma           &\quad&\text{ for some bijection } \sigma:U\to U'.
\end{alignat*}
To be even more concrete, let $U=V=\{1,2,3\}$ and represent
$f\in \Phi[U,V]$ by $f(1)f(2)f(3)$. As we have just seen, $g\sim_X f$ in
$\Phi[U,V]$ precisely when $f$ and $g$ are equal up to a permutation of $U$. Thus
the set of equivalence classes of $\Phi[U,V]$ with respect to $\sim_X$
is
$$
\{
\typeX{111}, \typeX{112}, \typeX{113}, \typeX{122}, \typeX{123}, \\
\typeX{133}, \typeX{222}, \typeX{223}, \typeX{233}, \typeX{333}
\},
$$
where we have chosen the lexicographically smallest representative for
each equivalence class. Why are there 10 equivalence classes in this
case? Well, up to a permutation of $U$, a function from $U$ to
$V=\{1,2,3\}$ is determined by the number of elements, $x_i$, that $f$
maps to $i\in V$. Every element of $U$ has to be mapped to exactly one
element of $V$ and thus $x_1+x_2+x_3=|U|=3$. From elementary
combinatorics we know that the number of nonnegative solutions to this
equation is $\binom{|U|+|V|-1}{|U|}=\binom{5}{3}=10$.

For $f\in \Phi[U,V]$ and $g\in \Phi[U,V']$, we have
\begin{alignat*}{2}
  g\sim_Y f
  &\iff \Phi[\Id_U,\tau](g) = f &\quad&\text{ for some bijection } \tau:V\to V' \\
  &\iff \tau\circ g = f         &\quad&\text{ for some bijection } \tau:V\to V'.
\end{alignat*}
In particular, if $V=V'$ then $f\sim_Y g$ precisely when $f$ and $g$ are
equal up to a permutation of $V$. Now, up to a permutation of $V$, a
function $f\in \Phi[U,V]$ is determined by the set of its nonempty
fibres $\{ f^{-1}(v): v\in f(U)\}$, sometimes called the coimage of
$f$. This set forms a partition of $U$ into at most $|V|$ blocks and hence the
number of equivalence classes of $\Phi[U,V]$ with respect to $\sim_Y$ is
$S(n,0)+S(n,1)+\cdots+S(n,k)$, where $n=|U|$ and $k=|V|$.  If
$U=V=\{1,2,3\}$, then the $S(3,1)+S(3,2)+S(3,3)=1+3+1=5$ equivalence classes
are $\typeY{111}$, $\typeY{112}$, $\typeY{121}$, $\typeY{122}$, and $\typeY{123}$.

We now continue our development of the general framework.
Following Joyal~\cite{Joyal1986} we shall use the notation
\begin{align*}
  \simsum_U F[U,V] &\,=\,
    \big\{\, \typeX{s} : s\in F[U,V]\text{ for some finite set $U$}\, \big\}; \\
  \simsum_V F[U,V] &\,=\,
    \big\{\, \typeY{s} : s\in F[U,V]\text{ for some finite set $V$}\, \big\}.
\end{align*}
We would like to define a unisort ``species of types'' whose
structures are precisely the elements of $\simsum_UF[U,V]$. This
doesn't, however, quite fit our current species framework, the reason
being that the set $\simsum_UF[U,V]$ could be infinite. For
instance, consider (again) the species $\Phi(X,Y)$ of functions
$f:U\to V$. For any nonempty finite sets $U$ and $V$, both
$\simsum_U\Phi[U,V]$ and $\simsum_V\Phi[U,V]$ are infinite.
Indeed, if for simplicity we assume that $V=\{v\}$ is a singleton,
then $\simsum_U\Phi[U,V]$ contains one element per positive size of
the domain:
\[
  \begin{tikzpicture}[semithick,baseline=(x.base),xscale=0.65, yscale=0.85]
    \draw[thin, rounded corners] (-0.7, 0.5) rectangle ++(3.4,-1.3);
    \draw[rounded corners] (-0.4, 0.3) rectangle ++(0.8,-0.9);
    \draw[rounded corners] ( 1.6, 0.3) rectangle ++(0.8,-0.9);
    \node[fill, circle, inner sep=1.4pt] (x) at (0,0) {};
    \node[draw, circle, inner sep=1.4pt, label=below:$\scriptstyle v$] (y) at (2, 0) {};
    \draw[postaction={decorate},decoration={markings,mark=at position 0.5 with {\arrow{>}}}] (x) -- (y);
  \end{tikzpicture}
  \qquad
  \begin{tikzpicture}[semithick,baseline=(x.base),xscale=0.65, yscale=0.85]
    \draw[thin, rounded corners] (-0.7, 0.5) rectangle ++(3.4,-1.5);
    \draw[rounded corners] (-0.4, 0.3) rectangle ++(0.8,-1.1);
    \draw[rounded corners] ( 1.6, 0.3) rectangle ++(0.8,-1.1);
    \node[fill, circle, inner sep=1.4pt] (x1) at (0,0) {};
    \node[fill, circle, inner sep=1.4pt] (x2) at (0,-0.4) {};
    \node[draw, circle, inner sep=1.4pt, label=below:$\scriptstyle v$] (y) at (2, 0) {};
    \draw[postaction={decorate},decoration={markings,mark=at position 0.5 with {\arrow{>}}}] (x1) -- (y);
    \draw[postaction={decorate},decoration={markings,mark=at position 0.5 with {\arrow{>}}}] (x2) -- (y);
  \end{tikzpicture}
  \qquad
  \begin{tikzpicture}[semithick,baseline=(x.base),xscale=0.65, yscale=0.85]
    \draw[thin, rounded corners] (-0.7, 0.5) rectangle ++(3.4,-1.65);
    \draw[rounded corners] (-0.4, 0.3) rectangle ++(0.8,-1.25);
    \draw[rounded corners] ( 1.6, 0.3) rectangle ++(0.8,-1.25);
    \node[fill, circle, inner sep=1.4pt] (x1) at (0,0) {};
    \node[fill, circle, inner sep=1.4pt] (x2) at (0,-0.3) {};
    \node[fill, circle, inner sep=1.4pt] (x3) at (0,-0.6) {};
    \node[draw, circle, inner sep=1.4pt, label=below:$\scriptstyle v$] (y) at (2, 0) {};
    \draw[postaction={decorate},decoration={markings,mark=at position 0.5 with {\arrow{>}}}] (x1) -- (y);
    \draw[postaction={decorate},decoration={markings,mark=at position 0.5 with {\arrow{>}}}] (x2) -- (y);
    \draw[postaction={decorate},decoration={markings,mark=at position 0.5 with {\arrow{>}}}] (x3) -- (y);
  \end{tikzpicture}
  \qquad\raisebox{-2.5pt}{$\boldsymbol{\cdots}$}
\]
Similarly, if $U=\{u\}$ is a singleton, then
$\simsum_V\Phi[U,V]$ contains one element per positive size of the
codomain:
\[
  \begin{tikzpicture}[semithick,baseline=(x.base),xscale=0.65, yscale=0.85]
    \draw[thin, rounded corners] (-0.7, 0.5) rectangle ++(3.4,-1.3);
    \draw[rounded corners] (-0.4, 0.3) rectangle ++(0.8,-0.9);
    \draw[rounded corners] ( 1.6, 0.3) rectangle ++(0.8,-0.9);
    \node[fill, circle, inner sep=1.4pt, label=below:$\scriptstyle u$] (x) at (0,0) {};
    \node[draw, circle, inner sep=1.4pt] (y) at (2, 0) {};
    \draw[postaction={decorate},decoration={markings,mark=at position 0.5 with {\arrow{>}}}] (x) -- (y);
  \end{tikzpicture}
  \qquad
  \begin{tikzpicture}[semithick,baseline=(x.base),xscale=0.65, yscale=0.85]
    \draw[thin, rounded corners] (-0.7, 0.5) rectangle ++(3.4,-1.5);
    \draw[rounded corners] (-0.4, 0.3) rectangle ++(0.8,-1.1);
    \draw[rounded corners] ( 1.6, 0.3) rectangle ++(0.8,-1.1);
    \node[fill, circle, inner sep=1.4pt, label=below:$\scriptstyle u$] (x) at (0,0) {};
    \node[draw, circle, inner sep=1.4pt] (y) at (2, 0) {};
    \node[draw, circle, inner sep=1.4pt]     at (2,-0.4) {};
    \draw[postaction={decorate},decoration={markings,mark=at position 0.5 with {\arrow{>}}}] (x) -- (y);
  \end{tikzpicture}
  \qquad
  \begin{tikzpicture}[semithick,baseline=(x.base),xscale=0.65, yscale=0.85]
    \draw[thin, rounded corners] (-0.7, 0.5) rectangle ++(3.4,-1.65);
    \draw[rounded corners] (-0.4, 0.3) rectangle ++(0.8,-1.25);
    \draw[rounded corners] ( 1.6, 0.3) rectangle ++(0.8,-1.25);
    \node[fill, circle, inner sep=1.4pt, label=below:$\scriptstyle u$] (x) at (0,0) {};
    \node[draw, circle, inner sep=1.4pt] (y) at (2, 0) {};
    \node[draw, circle, inner sep=1.4pt]     at (2,-0.3) {};
    \node[draw, circle, inner sep=1.4pt]     at (2,-0.6) {};
    \draw[postaction={decorate},decoration={markings,mark=at position 0.5 with {\arrow{>}}}] (x) -- (y);
  \end{tikzpicture}
  \qquad\raisebox{-2.5pt}{$\boldsymbol{\cdots}$}
\]
We'll handle this using weighted species, but first we need to
generalize the notion of an $R$-weighted species. Recall that we
defined an $R$-weighted set to be a pair $(A,w)$, where $A$ is a
finite set and $w:A\to R$. Let us say that $(A,w)$ is a
\emph{summable $R$-weighted set} if $A$ is a (possibly infinite) set,
$w:A\to R$, and the total weight $|A|_w = \sum_{a\in A}w(a)$ exists.
Then an $R$-weighted species is a functor
$F:\CoreFinSet\to(\Set/R)$, where $(\Set/R)$ is the category of
summable $R$-weighted sets with morphisms of $R$-weighted sets as
arrows.

We are now ready to define the desired species of types. For any two-sort
species $F(X,Y)$ we define the $\ZZ[[x]]$-weighted species
$\simsum_UF[U,Y)$ as follows: For any finite set $V$, let the
collection of structures on $V$ be the summable $\ZZ[[x]]$-weighted
set $\bigl(\simsum_UF[U,V], w \bigr)$,
where for any $s$ in $F[U, V]$ the weight of $\typeX{s}$ in $\simsum_UF[U,V]$ is $w(s)=x^{|U|}$.
For any bijection $\tau:V\to V'$, define the
transport of structure $\simsum_UF[U,\tau]$ as the map
\[
 [\,s\,]_X \,\longmapsto\,
 \bigl[\,F[\Id_U,\tau](s)\,\bigr]_X\hspace{1pt},
\]
which is clearly weight-preserving. The resulting species,
$\simsum_UF[U,Y)$, is called the \emph{species of types according to
$X$}; the species $\simsum_VF(X,V]$ of types according to $Y$ is
defined analogously. We'll use the following notation for their generating series:
\begin{align*}
  \simsum_U F[U,y)
  &= \sum_{n,k\geq 0}\big|\,F[n,k]/{\sim}_X\,\big|\,x^n\frac{y^k}{k!},\\
  \simsum_V F(x,V]
  &= \sum_{n,k\geq 0}\big|\,F[n,k]/{\sim}_Y\,\big|\,\frac{x^n}{n!}y^k,
\end{align*}
where $F[U,V]/{\sim}_X = \{\typeX{s}: s\in F[U,V] \}$ and
$F[U,V]/{\sim}_Y = \{\typeY{s}: s\in F[U,V] \}$ are the sets of
equivalence classes with respect to the relations $\sim_X$ and
$\sim_Y$. Let us briefly return to the $\Phi(X,Y)$ species for an example. We
discussed the equivalence classes of $\Phi[U,V]$ with respect to
$\sim_X$ and $\sim_Y$ in some detail above, and from that discussion it
is clear that
\begin{align*}
  \simsum_U \Phi[U,y)
       &= \sum_{n,k\geq 0}\binom{n+k-1}{n}\,x^n\frac{y^k}{k!}; \\
  \simsum_V \Phi(x,V]
      &= \sum_{n,k\geq 0}\big(S(n,0)+\cdots+S(n,k)\big)\,\frac{x^n}{n!}y^k.
\end{align*}

The cycle index series for a two-sort species $F(X,Y)$ is defined by
\begin{equation*}
  Z_{F}(x_1,x_2,\dots;y_1,y_2,\dots) =
    \sum_{\substack{n\geq 0\\ k\geq 0}}\,\frac{1}{n!k!}
    \sum_{\substack{\sigma\in\Sym[n]\\ \tau\in\Sym[k]}}|\Fix F[\sigma,\tau]|\,
      x_1^{c_1(\sigma)}x_2^{c_2(\sigma)}\cdots\, y_1^{c_1(\tau)}y_2^{c_2(\tau)}\cdots
\end{equation*} One can show that if $F(X,Y)$, $G(X,Y)$, and
$H(X,Y)$ are two-sort species, then $Z_{F+G}=Z_F+Z_G$,
$Z_{FG}=Z_F Z_G$, and
\begin{multline*}
  \ \quad Z_{F(G,H)}(\,x_1,x_2,x_3\dots;\, y_1,y_2,y_3\dots\,) = \\
      \arraycolsep=0.9pt
      \renewcommand{\arraystretch}{1.1}
      \begin{array}{rllllllllllllllllll}
        Z_F\big(\,Z_G( &x_1,&x_2,&x_3&\dots),\, &Z_G(x_2,&x_4,&x_6,\dots),\, &Z_G(x_3,&x_6,&x_9,&\dots), \dots ; \\
                  Z_H( &y_1,&y_2,&y_3&\dots),\, &Z_H(y_2,&y_4,&y_6,\dots),\, &Z_H(y_3,&y_6,&y_9,&\dots), \dots\,\big)
      \end{array}\quad
\end{multline*}
The major generating series associated with a two-sort
species relate to the cycle index series as follows:
\[
\arraycolsep=1.3pt
\renewcommand{\arraystretch}{1.3}
\begin{array}{rclllllllllll}
               F(x,y) &\,=\,& Z_F(&x, 0, 0, 0,  &\dots&;\, &y, 0, 0, 0,  &\dots &); \\
     \simsum_U F[U,y) &\,=\,& Z_F(&x, x^2, x^3, &\dots&;\, &y, 0, 0, 0,  &\dots &); \\
     \simsum_V\hspace{0.9pt}F(x,V] &\,=\,& Z_F(&x, 0, 0, 0,  &\dots&;\, &y, y^2, y^3, &\dots &); \\
         \typ{F}(x,y) &\,=\,& Z_F(&x, x^2, x^3, &\dots&;\, &y, y^2, y^3, &\dots &).
\end{array}
\]

\hypertarget{the-twelvefold-way}{%
\section{The twelvefold way}\label{the-twelvefold-way}}

Let $R(X)$ be a unisort species and consider the two-sort species
\[
\Fun(X,Y) = E(R(X) Y)
\]
of \emph{functions with $R$-enriched fibres}~\cite[p.\ 113]{Bergeron1998}:\smallskip
\[
  \begin{tikzpicture}[semithick, scale=0.85]
    \draw[rounded corners] (0,0) rectangle +(2,4);
    \draw [zigzag] (0,1.00) -- (2,1.25);
    \draw [zigzag] (0,2.80) -- (2,2.70);
    \draw[rounded corners] (3.5,0) rectangle +(1.5,4);
    \node (R1) at (0.75,0.5) {\small $R(X)$};
    \node (R2) at (0.75,1.8) {\small $R(X)$};
    \node (R3) at (0.75,3.3) {\small $R(X)$};
    \node[draw, circle, inner sep=1.4pt] (Y1) at (4.1,0.7) {};
    \node at (4.5,0.7) {\small $Y$};
    \node[draw, circle, inner sep=1.4pt] (Y2) at (4.1,1.95) {};
    \node at (4.5,1.9) {\small $Y$};
    \node[draw, circle, inner sep=1.4pt] (Y3) at (4.1,3.2) {};
    \node at (4.5,3.2) {\small $Y$};
    \node[fill, circle, inner sep=1.4pt] (r11) at (1.5,0.5) {};
    \draw[postaction={decorate},decoration={markings,mark=at position 0.5 with {\arrow{>}}}] (r11) -- (Y1);
    \node[fill, circle, inner sep=1.4pt] (r21) at (1.1,2.30) {};
    \node[fill, circle, inner sep=1.4pt] (r22) at (1.4,1.59) {};
    \node[fill, circle, inner sep=1.4pt] (r23) at (1.7,1.95) {};
    \draw[postaction={decorate},decoration={markings,mark=at position 0.5 with {\arrow{>}}}] (r21) -- (Y2);
    \draw[postaction={decorate},decoration={markings,mark=at position 0.5 with {\arrow{>}}}] (r22) -- (Y2);
    \draw[postaction={decorate},decoration={markings,mark=at position 0.5 with {\arrow{>}}}] (r23) -- (Y2);
    \node[fill, circle, inner sep=1.4pt] (r31) at (1.5,3.55) {};
    \node[fill, circle, inner sep=1.4pt] (r32) at (1.6,3.10) {};
    \draw[postaction={decorate},decoration={markings,mark=at position 0.5 with {\arrow{>}}}] (r31) -- (Y3);
    \draw[postaction={decorate},decoration={markings,mark=at position 0.5 with {\arrow{>}}}] (r32) -- (Y3);
  \end{tikzpicture}
\]
Note that if $R(X)=E(X)$ is the set species, then $\Fun(X,Y)=\Phi(X,Y)$ is
the species of all functions $f:U\to V$ from above. If $R(X)=E_+(X)$ is the
species of nonempty sets, then we are requiring that each fibre
$f^{-1}(v) = \{u\in U: f(u) = v\}$ is nonempty, and so $f$ is
surjective. If $R(X)=1+X$, then each fibre is empty or a singleton, and we
get injective functions.

\begin{lemma}\label{lemma:funs}
  Let $\Fun(X,Y) = E(R(X)Y)$ be as above. Then
  \begin{align*}
    \vphantom{\vbox to 0.1em{}} \Fun(x,y) &= \exp\bigl( yR(x) \bigr); \\
    \vphantom{\vbox to 1.6em{}}     \xFun &= \exp\bigl(y\typ{R}(x)\bigr); \\
    \vphantom{\vbox to 1.6em{}}     \yFun &= \exp\Bigl( y\bigl(R(x)-R(0)\bigr)\Bigr)\bigl(1-y\bigr)^{-R(0)}; \\
    \vphantom{\vbox to 1.6em{}}\uFun(x,y) &= \exp\sum_{k\geq 1} \frac{y^k}{k} \typ{R}(x^k).
  \end{align*}
\end{lemma}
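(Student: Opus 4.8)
The plan is to reduce all four identities to a single computation --- that of the cycle index series $Z_{\Fun}$ --- and then read off each formula from the dictionary (established earlier) expressing $F(x,y)$, $\xFun$, $\yFun$, and $\uFun(x,y)$ as specialisations of $Z_F$.

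First I would compute $Z_{\Fun}$. Reading $\Fun=E(R(X)Y)$ as the substitution of the two-sort species $G(X,Y)=R(X)\,Y$ into $E$, which we regard as the two-sort species of sort $X$, the composition rule for cycle index series specialises to
\[
  Z_{\Fun}(x_1,x_2,\dots;y_1,y_2,\dots)
  = Z_E\bigl(Z_G(x_1,x_2,\dots;y_1,y_2,\dots),\,Z_G(x_2,x_4,\dots;y_2,y_4,\dots),\,\dots\bigr),
\]
the $y$-arguments of $Z_E$ playing no role because an $E$-structure has no elements of sort $Y$. Since $R$ is a species of sort $X$ we have $Z_{R(X)}(x_1,x_2,\dots;y_1,y_2,\dots)=Z_R(x_1,x_2,\dots)$, while the singleton species $Y$ has $Z_Y=y_1$; multiplying, $Z_G(x_1,x_2,\dots;y_1,y_2,\dots)=y_1\,Z_R(x_1,x_2,\dots)$, hence $Z_G(x_i,x_{2i},\dots;y_i,y_{2i},\dots)=y_i\,Z_R(x_i,x_{2i},x_{3i},\dots)$. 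Substituting this into $Z_E(p_1,p_2,\dots)=\exp\sum_{i\geq 1}p_i/i$ yields the master formula
\[
  Z_{\Fun}(x_1,x_2,\dots;y_1,y_2,\dots)
  = \exp\sum_{i\geq 1}\frac{y_i}{i}\,Z_R(x_i,x_{2i},x_{3i},\dots).
\]

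The four identities now follow from the four substitutions in the dictionary, together with $Z_R(x,0,0,\dots)=R(x)$, $Z_R(x,x^2,x^3,\dots)=\typ R(x)$, and $Z_R(0,0,0,\dots)=R(0)$. For $\Fun(x,y)$ and for $\xFun$ one sets $y_i=0$ for $i\geq 2$, so that only the $i=1$ term survives, giving $\exp\bigl(yR(x)\bigr)$ and $\exp\bigl(y\typ R(x)\bigr)$ respectively. For $\uFun(x,y)$ one puts $x_i=x^i$ and $y_i=y^i$, and since $Z_R(x^i,x^{2i},x^{3i},\dots)=\typ R(x^i)$ this gives $\exp\sum_{k\geq 1}\frac{y^k}{k}\typ R(x^k)$. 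The remaining case $\yFun$ uses $x_i=0$ for $i\geq 2$ and $y_i=y^i$: the $i=1$ term contributes $yR(x)$, every term with $i\geq 2$ contributes $\frac{y^i}{i}R(0)$, and summing the tail via $\sum_{i\geq 1}y^i/i=-\log(1-y)$ turns it into $R(0)\bigl(-\log(1-y)-y\bigr)$; exponentiating produces $\exp\bigl(y(R(x)-R(0))\bigr)(1-y)^{-R(0)}$.

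The only step that needs genuine care is the first one: justifying that substituting a two-sort species into the one-sort species $E$ is legitimate and that its cycle index obeys the composition rule written above. The cleanest route is to view $E$ as the two-sort species of sort $X$ and invoke the two-sort $Z_{F(G,H)}$ formula with $F=E$, noting that the $H$-slot is vacuous because an $E$-structure has no $Y$-sorted elements. Everything after that is routine bookkeeping with the dictionary and the two elementary evaluations $Z_R(x,0,0,\dots)=R(x)$ and $Z_R(0,0,0,\dots)=R(0)$.
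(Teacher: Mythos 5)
Your proposal is correct and follows essentially the same route as the paper: establish the master formula $Z_{\Fun}=\exp\sum_{k\geq 1}\frac{y_k}{k}Z_R(x_k,x_{2k},\dots)$ and then read off the four identities from the standard specializations of the cycle index series, including the same tail-summation $\sum_{i\geq 2}y^i/i=-\log(1-y)-y$ for the $\yFun$ case. If anything, you supply more detail than the paper, which asserts the master formula without derivation and works out only the $\yFun$ specialization explicitly.
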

\begin{proof}
  The first equality is immediate. Calculating the cycle index series we find that
  \begin{equation}\label{eq:Z_Phi_R}
    Z_{\Fun}(x_1,x_2,x_3,\dots;y_1,y_2,y_3,\dots) =
    \exp\sum_{k\geq 1}\frac{y_k}{k} Z_R\bigl(x_k,x_{2k},x_{3k},\dots\bigr)
  \end{equation}
  and the remaining three identities follow from calculating
  $Z_{\Fun}(x,x^2,x^3,\dots;y,0,0,0,\dots)$,
  $Z_{\Fun}(x,0,0,0,\dots;y,y^2,y^3,\dots)$, and
  $Z_{\Fun}(x,x^2,x^3,\dots;y,y^2,y^3,\dots)$.  Let us do the second
  of these three calculations. By isolating the first term in the sum on
  the right-hand side of \eqref{eq:Z_Phi_R} it follows that
  \begin{align*}
    \yFun
    &= Z_{\Fun}(x,0,0,0,\dots;y,y^2,y^3,\dots) \\
    &= \exp\biggl(
      yZ_R(x,0,0,\dots) +
      \sum_{k\geq 2}\frac{y^k}{k} Z_R\bigl(0,0,0,\dots\bigr)
      \biggr).
  \end{align*}
  Since $R(x) = Z_R(x,0,0,\dots)$ this can be further simplified as
  follows:
  \begin{align*}
    \yFun
    &= \exp\biggl( yR(x) + R(0)\sum_{k\geq 2}\frac{y^k}{k} \biggr) \\
    &= \exp\Bigl( yR(x) - yR(0) + R(0)\log (1-y)^{-1}\Bigr) \\
    &= \exp\Bigl( y\bigl(R(x)-R(0)\bigr)\Bigr)\bigl(1-y\bigr)^{-R(0)}.
  \end{align*}
  We leave verification of the remaining two identities to the reader.
\end{proof}

As an example, let us apply the above lemma when $R(X)=X$. Then each fibre
is a singleton and $\Fun[X]$ is the species of bijections $f:U\to V$. The
four generating series are
\begin{align}
   \Fun[X](x,y) &= e^{xy} = \sum_{n\geq 0}n!\cdot \frac{x^ny^n}{(n!)^2};\label{1:F[X]}\\
  \xFun[X] &= e^{xy} = \sum_{n\geq 0}1\cdot x^n\frac{y^n}{n!};\label{2:F[X]}\\
  \yFun[X] &= e^{xy} = \sum_{n\geq 0}1\cdot \frac{x^n}{n!}y^n;\label{3:F[X]}\\
  \uFun[X](x,y) &= \exp\sum_{k\geq 1} \frac{x^ky^k}{k}
   = \frac{1}{1-xy} = \sum_{n\geq 0}1\cdot x^ny^n\label{4:F[X]}.
\end{align}
In other words, to distribute $n$ balls into $k$ urns so that each urn
gets exactly one ball is impossible unless there are as many urns as
there are balls, that is, unless $n=k$. And, if $n=k$, then there are $n!$ ways to
distribute the balls if balls and urns are distinct,
and there is exactly one way if the balls or the urns are identical.

\begin{lemma}
  Let $A(X)$ and $B(X)$ be two unisort species, and let
  the two-sort species $\Fun(X,Y)=E(R(X)Y)$ be as above.  Then
  $$\Fun[A+B](X,Y) = \Fun[A](X,Y)\Fun[B](X,Y).
  $$
  In particular, if $R(X)=1+R_+(X)$, where $R_+(X)$ is the species of nonempty
  $R$-structures, then
  $$\Fun(X,Y) = E(Y)\Fun[R_+](X,Y).
  $$
\end{lemma}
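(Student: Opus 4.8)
The plan is to reduce both assertions to the ``exponential'' identity $E(P+Q) = E(P)\cdot E(Q)$ for two-sort species $P$ and $Q$, and then specialize. First I would use that species product distributes over species sum — a straightforward check on structures — to write $(A+B)(X)\,Y = A(X)Y + B(X)Y$ as two-sort species, so that $\Fun[A+B](X,Y) = E\bigl(A(X)Y + B(X)Y\bigr)$. Setting $P = A(X)Y$ and $Q = B(X)Y$ (the factor $Y$ ensures $P$ and $Q$ carry no structure on $(\emptyset,\emptyset)$, so these substitutions are defined), it then remains to prove $E(P+Q) = E(P)\cdot E(Q)$.

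For that identity I would construct the natural family of bijections directly. Unwinding the two-sort partitional composition, an $E(P+Q)$-structure on $(U,V)$ consists of a partition $\pi$ of $U\sqcup V$ — with the trivial $E$-structure on the set of its blocks — together with, for each block $B$ regarded as the pair $(B\cap U,\,B\cap V)$, a $(P+Q)$-structure on it, that is, either a $P$-structure or a $Q$-structure. Let $\pi_P$ and $\pi_Q$ be the sub-collections of blocks carrying a $P$-structure and a $Q$-structure respectively. Then $\pi_P$ is a partition of $(U_1,V_1)$ and $\pi_Q$ a partition of $(U_2,V_2)$, where $U = U_1\sqcup U_2$ and $V = V_1\sqcup V_2$; keeping the induced block structures turns $\pi_P$ into an $E(P)$-structure on $(U_1,V_1)$ and $\pi_Q$ into an $E(Q)$-structure on $(U_2,V_2)$, i.e.\ an $(E(P)\cdot E(Q))$-structure on $(U,V)$. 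This map is visibly invertible, and checking that it commutes with transport of structure along a pair of bijections $(\sigma,\tau)\colon(U,V)\to(U',V')$ is routine: the partition, the colours, the two sub-partitions and all the block structures are transported blockwise. This proves the first claim; equivalently, one can read it off the cycle index formulas, since $Z_{E(P+Q)} = Z_E(Z_{P+Q}(\dots),\dots) = \exp\sum_k \tfrac1k\bigl(Z_P + Z_Q\bigr) = Z_{E(P)}\cdot Z_{E(Q)}$.

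For the ``in particular'' statement I would apply the first part with $A = 1$ and $B = R_+$, using $R = 1+R_+$, to get $\Fun(X,Y) = \Fun[1](X,Y)\cdot\Fun[R_+](X,Y)$. It then suffices to identify $\Fun[1](X,Y) = E(Y)$: the constant species $1$ is the unit for species product, so $1(X)\,Y = Y$ (the singleton species of sort $Y$), whence $\Fun[1](X,Y) = E(1(X)Y) = E(Y)$, and the desired factorization $\Fun(X,Y) = E(Y)\,\Fun[R_+](X,Y)$ follows.

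I expect the only delicate point to be keeping the two-sort bookkeeping straight — in particular that a block $B$ of a partition of $U\sqcup V$ is to be read as the pair $(B\cap U,\,B\cap V)$, and that splitting $\pi$ into its $P$-blocks and $Q$-blocks splits $U$ and $V$ \emph{simultaneously} — rather than any genuine difficulty; the underlying content is just the combinatorial form of $\exp(p+q)=\exp(p)\exp(q)$.
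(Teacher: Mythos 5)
Your proof is correct and follows essentially the same route as the paper: distribute the product over the sum to get $E\bigl(A(X)Y+B(X)Y\bigr)$, apply $E(P+Q)=E(P)E(Q)$, and obtain the second claim by specializing to $A=1$, $B=R_+$ with $\Fun[1](X,Y)=E(Y)$. The only difference is that you spell out the bijective proof of the exponential identity $E(P+Q)=E(P)E(Q)$, which the paper simply invokes as known.
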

\begin{proof}
  For the first statement we have
  \[
    \Fun[A+B]
      = E\bigl((A+B)(X)Y \bigr)
      = E\bigl(A(X)Y +B(X)Y \bigr)
      = E\bigl(A(X)Y \bigr)E\bigl(B(X)Y \bigr)
      = \Fun[A]\Fun[B].
  \]
  The second statement is simply $\Fun[1](X,Y)=E(Y)$.
\end{proof}

\begin{corollary}\label{cor:zfun}
  Let $A(X)$ and $B(X)$ be two unisort species, and let $\Fun(X,Y)$
  be as above.  Then $Z_{\Fun[A+B]} = Z_{\Fun[A]}Z_{\Fun[B]}$.  In
  particular, if $R(X)=1+R_+(X)$ then $Z_{\Fun} = Z_{E(Y)}Z_{\Fun[R_+]}\!$,
  and consequently\vspace{-1.5ex}
  \begin{align*}
    \vphantom{\vbox to 0.1em{}} \Fun(x,y) &= E(y)\cdot\Fun[R_+](x,y), \\
    \vphantom{\vbox to 1.4em{}}\xFun &= E(y)\cdot\xFun[R_+], \\
    \vphantom{\vbox to 1.4em{}}\yFun &= \typ{E}(y)\cdot\yFun[R_+], \\
    \vphantom{\vbox to 1.4em{}}\uFun(x,y) &= \typ{E}(y)\cdot\uFun[R_+](x,y),
  \end{align*}
  in which $E(y)=e^y$ and $\typ{E}(y) = 1/(1-y)$.
\end{corollary}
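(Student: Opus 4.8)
The plan is to reduce the corollary to the preceding lemma together with two facts already recorded in Section~\ref{combinatorial-species}: the multiplicativity $Z_{FG} = Z_F Z_G$ for two-sort species, and the display at the end of that section expressing the four ``major'' generating series as specialisations of the cycle index series. First, the preceding lemma gives $\Fun[A+B](X,Y) = \Fun[A](X,Y)\,\Fun[B](X,Y)$ as an identity of two-sort species, so taking cycle index series and using multiplicativity yields $Z_{\Fun[A+B]} = Z_{\Fun[A]}Z_{\Fun[B]}$ immediately. The same lemma, applied with $R(X) = 1 + R_+(X)$ (so that $A = 1$ and $B = R_+$), gives $\Fun(X,Y) = E(Y)\,\Fun[R_+](X,Y)$, and hence $Z_{\Fun} = Z_{E(Y)}\,Z_{\Fun[R_+]}$.

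For the four scalar identities I would use the fact that each of the four substitutions sending $Z_{\Fun}$ to $\Fun(x,y)$, $\xFun$, $\yFun$, and $\uFun(x,y)$ respectively --- namely $(x_1,x_2,\dots)\mapsto(x,0,0,\dots)$ or $(x,x^2,x^3,\dots)$ in the $x$-variables, and $(y_1,y_2,\dots)\mapsto(y,0,0,\dots)$ or $(y,y^2,y^3,\dots)$ in the $y$-variables --- is a ring homomorphism, so it carries the product $Z_{E(Y)}\,Z_{\Fun[R_+]}$ to the product of the images. Applied to $Z_{\Fun[R_+]}$, these four substitutions produce, by the definitions of the corresponding series, exactly $\Fun[R_+](x,y)$, $\xFun[R_+]$, $\yFun[R_+]$, and $\uFun[R_+](x,y)$. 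So all that remains is to evaluate $Z_{E(Y)}$ under each substitution. Since $E(Y)$ is the set species of sort $Y$ --- its structures on $(U,V)$ being empty unless $U = \emptyset$ --- a glance at the defining sum shows $Z_{E(Y)}(x_1,\dots;y_1,\dots)$ involves only the $y$-variables, and indeed $Z_{E(Y)}(x_1,\dots;y_1,\dots) = Z_E(y_1,y_2,\dots) = \exp\sum_{k\geq 1} y_k/k$, the last equality being the computation of $Z_E$ in Section~\ref{combinatorial-species}. Under the first two substitutions the $y$-variables are set to $(y,0,0,\dots)$, so $Z_{E(Y)}$ evaluates to $\exp(y) = e^y = E(y)$; under the last two they are set to $(y,y^2,y^3,\dots)$, so it evaluates to $\exp\sum_{k\geq 1} y^k/k = (1-y)^{-1} = \typ{E}(y)$. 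Pairing these four values with the corresponding images of $Z_{\Fun[R_+]}$ gives the four displayed formulas.

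I do not expect any real obstacle. The only things to keep straight are the bookkeeping of which indeterminates are set to what, and the observation that $Z_{E(Y)}$ depends on the $y$-variables alone --- this is precisely what makes the first factor collapse to $e^y$ under the two ``$y_i = 0$ for $i\geq 2$'' substitutions and to $(1-y)^{-1}$ under the two ``$y_i = y^i$'' substitutions, matching the statement of the corollary.
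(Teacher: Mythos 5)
Your argument is correct and is exactly the route the paper intends (the corollary is stated without an explicit proof, as an immediate consequence of the preceding lemma, the multiplicativity $Z_{FG}=Z_FZ_G$, and the table of specialisations of the cycle index series at the end of Section~\ref{combinatorial-species}). Your observation that $Z_{E(Y)}$ depends only on the $y$-variables, evaluating to $e^y$ under $y_i\mapsto 0$ for $i\geq 2$ and to $(1-y)^{-1}$ under $y_i\mapsto y^i$, is precisely the point that makes the four displayed identities drop out.
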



The twelvefold way can be seen as the twelve generating series obtained
from calculating, for each $R(X)\in \{E(X), E_+(X), 1+X\}$, the four generating
series in Lemma~\ref{lemma:funs}. Note that it suffices to know $R(x)$ and
$\typ{R}(x)$ to calculate those four series. For reference we record $R(x)$
and $\typ{R}(x)$ in Table~\ref{table:R}.
\begin{table}[h]
  \[
  \arraycolsep=1em
  \renewcommand{\arraystretch}{1.3}
  \begin{array}{*3{>{\displaystyle}r|}}
    R(X) &R(x) &\typ{R}(x) \\
    \hline
    X      &x           &x \\
    1+X    &1+x         &1+x\\
    E_+(X) &e^x-1       & x(1-x)^{-1} \\
    E(X)   & e^x        & (1-x)^{-1} \\
    L_+(X) & x(1-x)^{-1} & x(1-x)^{-1} \\
    L(X)   & (1-x)^{-1} & (1-x)^{-1} \\
    \Cyc(X)& \log\,(1-x)^{-1} & x(1-x)^{-1}
  \end{array}
  \]
  \caption{Some (type) generating series}\label{table:R}
\end{table}
We shall run through the cases of this table going from top to bottom.
Having already done $R(X)=X$, we proceed with $R(X)=1+X$. By direct
calculation, or from Corollary~\ref{cor:zfun} and equations
\eqref{1:F[X]} through \eqref{4:F[X]}, we immediately get
\begin{align}
  \Fun[1+X](x,y) &= e^{y(1+x)}
   = \sum_{n,k\geq 0}(k)_n\,\frac{x^n}{n!}\frac{y^k}{k!},\label{1:F[1+X]}\\
  \xFun[1+X] &= e^{y(1+x)}
   = \sum_{n,k\geq 0}\binom{k}{n}\,x^n\frac{y^k}{k!},\label{2:F[1+X]}\\
  \yFun[1+X] &= \frac{1}{1-y}e^{xy}
   = \sum_{n,k\geq 0}[n\leq k]\,\frac{x^n}{n!}y^k,\label{3:F[1+X]}\\
  \uFun[1+X](x,y) &= \frac{1}{1-y}\cdot\frac{1}{1-xy}
   = \sum_{n,k\geq 0}[n\leq k]\,x^ny^k,\label{4:F[1+X]}
\end{align}
where $(k)_n=k(k-1)\cdots(k-n+1)$ denotes the
falling factorial and $[P]$ is the Iverson bracket, which is 1 if the
proposition $P$ is true and 0 if it is false.

Assuming that each urn contains at most one ball, there are thus $(k)_n$
ways to distribute $n$ distinct balls into $k$ distinct urns; there are
$\binom{k}{n}$ ways to distribute $n$ identical balls into $k$ distinct
urns; if $n\leq k$, then there's exactly one way to distribute $n$ balls
(distinct or identical) into $k$ identical urns, and if $n>k$ there's no
way to distribute the balls.

As a side note, the generating series in \eqref{2:F[1+X]} is the same as
for the $\ZZ[x]$-weighted species $\Pow_w(Y)=E_w(Y)E(Y)$ in which the
weight of an $E_w$-structure is $w(A)=x^{|A|}$. Indeed, those
species are combinatorially equal:
$\tXFun[1+X]=\Pow_w(Y)$.

Let us now consider $R(X)=E_+(X)$. Using Lemma~\ref{lemma:funs} we find that
\begin{align}
  \Fun[E_+](x,y)
  &= \exp\big( y(e^x - 1) \big)
    = \sum_{n,k\geq 0}k! S(n,k)\,\frac{y^k}{k!}\frac{x^n}{n!},\label{1:F[E_+]}\\
  \xFun[E_+]
  &= \exp\left( \frac{xy}{1-x}\right)
    = \sum_{n,k\geq 0}\binom{n-1}{n-k}\, x^n\frac{y^k}{k!},\label{2:F[E_+]}\\
  \yFun[E_+]
  &= \exp\Big( y(e^x - 1) \Big)
    = \Par_w(x) = \sum_{n,k\geq 0}S(n,k)\,\frac{x^n}{n!}y^k,\label{3:F[E_+]}\\
  \uFun[E_+](x,y)
  &= \exp \sum_{k\geq 1}\frac{y^k}{k}\frac{x^k}{1-x^k}
    = \prod_{k\geq 1}\frac{1}{1-yx^k}
    = \sum_{n,k\geq 0}p_k(n)\,x^ny^k,\label{4:F[E_+]}
\end{align}
where $S(n,k)$ is a Stirling number of the second kind and $p_k(n)$ is
the number of integer partitions of $n$ with $k$ parts. Note that the
second equality in \eqref{4:F[E_+]} is the same as
\eqref{eq:integer-partitions-with-k-blocks} on page
\pageref{eq:integer-partitions-with-k-blocks}, above. The interpretation
of the formulas above in terms of balls and urns should at this stage be
clear.

The last case of the twelvefold way is $R(X)=E(X)$. Since $E(X)=1+E_+(X)$ we may use
Corollary~\ref{cor:zfun} when convenient, and we have
\begin{align}
  \Fun[E](x,y)
      &= \exp\big( y e^x  \big)
       = \sum_{n,k\geq 0}k^n\,\frac{x^n}{n!}\frac{y^k}{k!}; \\
  \xFun[E]
       &= \exp\left(\frac{y}{1-x}\right)
       = \sum_{n,k\geq 0}\binom{n+k-1}{n}\,x^n\frac{y^k}{k!}; \\
  \yFun[E]
      &= \frac{1}{1-y}\yFun[E_+]
      = \sum_{n,k\geq 0}\big(S(n,0)+\cdots+S(n,k)\big)\,\frac{x^n}{n!}y^k; \\
  \uFun[E](x,y)
      &= \frac{1}{1-y}\uFun[E_+](x,y)
        =\sum_{n,k\geq 0}\big(p_0(n) +\cdots+ p_k(n) \big)\,x^ny^k.
\end{align}

\begin{table}
  {\small
  $$
  \arraycolsep=0.75em
  \renewcommand{\arraystretch}{2.1}
  \begin{array}{*5{>{\displaystyle}r|}}
    R(X) &\Fun(x,y) &\xFun &\yFun &\uFun(x,y) \\
    \hline
    X       &\exp(xy)
            &\exp(xy)
            &\exp(xy)
            &\frac{1}{1-xy} \\
    1+X     &\exp(y(1+x))
            &\exp(y(1+x))
            &\frac{1}{1-y}\exp(xy)
            &\frac{1}{1-y}\cdot\frac{1}{1-xy} \\
    E_+(X)  &\exp(y(e^x-1))
            &\exp\left(\frac{xy}{1-x}\right)
            &\exp(y(e^x-1))
            &\prod_{k\geq 1}\frac{1}{1-yx^k} \\
    E(X)    & \exp\bigl(ye^x\bigr)
            &\exp\left(\frac{y}{1-x}\right)
            & \frac{1}{1-y}\exp\big(y(e^x-1)\big)
            & \frac{1}{1-y}\prod_{k\geq 1}\frac{1}{1-yx^k} \\
    L_+(X)  & \exp\left(\frac{xy}{1-x}\right)
            & \parbox[t]{7.8em}{\raggedleft See $R(X)=E_+(X)$}
            & \exp\left(\frac{xy}{1-x}\right)
            & \parbox[t][1.9em]{7.8em}{\raggedleft See $R(X)=E_+(X)$}\\
    L(X)    & \exp\left(\frac{y}{1-x}\right)
            & \parbox[t]{7.8em}{\raggedleft See $R(X)=E(X)$}
            & \frac{1}{1-y}\exp\left(\frac{xy}{1-x}\right)
            & \parbox[t]{7.8em}{\raggedleft See $R(X)=E(X)$} \\
    \Cyc(X) & (1-x)^{-y}
            & \parbox[t]{7.8em}{\raggedleft See $R(X)=E_+(X)$}
            & (1-x)^{-y}
            & \parbox[t]{7.8em}{\raggedleft See $R(X)=E_+(X)$}
  \end{array}
  $$
  }
  \caption{The twenty-two-fold way}\label{table:manifold}
\end{table}

Bogart's~\cite{Bogart2004} twentyfold way is a natural extension of
Rota's twelvefold way, which, in our setting, amounts to having more
choices for $R(X)$. For the twelvefold way $R(X)$ is one of $E(X)$, $E_+(X)$, or
$1+X$. For the twentyfold way there are three additional options:
$X$, $L(X)$, and $L_+(X)$. At first glance this appears to give us
$6\cdot 4 = 24$ cases, but since $\typ{E}(x)=\typ{L}(x)$ and
$\typ{E}_+(x)=\typ{L}_+(x)$ there are only 20 distinct cases.

Let us explore these additional cases, and let us start with
$R(X)=L_+(X)$. This gives us two new series (the other two series being the
same as for $R(X)=E_+(X)$):
\begin{align*}
  \Fun[L_+](x,y) &= \exp\left(\frac{xy}{1-x}\right)
    = \sum_{n,k\geq 0}k!L(n,k)\,\frac{x^n}{n!}\frac{y^k}{k!}, \\
  \yFun[L_+] &= \exp\left(\frac{xy}{1-x}\right)
    = \sum_{n,k\geq 0}L(n,k)\,\frac{x^n}{n!}y^k,
\end{align*}
where $L(n,k)=\binom{n}{k}(n-1)_{n-k}$ denotes a Lah
number~\cite{Lah1955}. Using the language of species, the standard
combinatorial interpretation of these numbers is as follows: Define the
species $\Lah=E(L_+)$, so that a $\Lah$-structure is a set of nonempty
disjoint linear orders, such as $\{63,9,4815,27\}\in \Lah[9]$. The
number of $\Lah$-structures on $[n]$ with $k$ blocks / linear orders is
$L(n,k)$.

With $R(X)=L(X)$ the two new series are
\begin{align*}
   \Fun[L](x,y) &= \exp\left(\frac{y}{1-x}\right)
      = \sum_{n,k\geq 0}(n+k-1)_n\,\frac{x^n}{n!}\frac{y^k}{k!}, \\
  \yFun[L] &= \frac{1}{1-y}\yFun[L_+]
      = \sum_{n,k\geq 0}\bigl(L(n,0)+\cdots+L(n,k)\bigr)\frac{x^n}{n!}y^k,
\end{align*}
and this concludes the twentyfold way.

We have seen Stirling numbers of the second kind ($R(X)=E_+(X)$). We have also
seen $\Lah$ numbers ($R(X)=L_+(X)$), which are sometimes referred to as
Stirling numbers of the third kind~\cite{Tsylova1985}. But what about
the Stirling numbers of the first kind?  Is there a choice of $R(X)$ which
gives those numbers? Note that the species constructions for the
Stirling numbers of the second and third kind follow a similar pattern:
$\Par(X) = E(E_+(X))$ and $\Lah(X) = E(L_+(X))$. Assume, more generally, that the
species $R(X)$ and $R^c(X)$ are related by $R(X)=E(R^c(X))$, so that an
$R$-structure is a set of disjoint $R^c$-structures. We may then call
$R^c$ the species of \emph{connected} $R$-structures~\cite[p.\ 46]{Bergeron1998}.

\begin{proposition}\label{prop:components}
  Assume that $R(X)=E(R^c(X))$ and define the $\ZZ[y]$-weighted species
  $R_w(X)=E(yR^c(X))$, where $yR^c(X)$ is like $R^c(X)$ but with a weight $y$ given
  to each structure. Then
  $$
  \tYFun[R^c] = R_w(X).
  $$
\end{proposition}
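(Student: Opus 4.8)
The plan is to prove the asserted \emph{combinatorial} equality of $\ZZ[[y]]$-weighted species by constructing an explicit weight-preserving natural isomorphism, rather than merely matching generating series (which would not be enough). The starting observation is that $\Fun[R^c](X,Y)=E(R^c(X)Y)$ holds by definition of $\Fun[R^c]$, so the statement to be proved is that the species of types of $E(R^c(X)Y)$ according to $Y$ is combinatorially equal to $R_w(X)=E(yR^c(X))$.

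First I would spell out the structures on a fixed finite set $U$. A $\Fun[R^c]$-structure on a pair $(U,V)$ is a function $f\colon U\to V$ together with a choice of $r_v\in R^c[f^{-1}(v)]$ for each $v\in V$, i.e.\ a function with $R^c$-enriched fibres. Because $R(X)=E(R^c(X))$ presupposes $R^c[\emptyset]=\emptyset$, every fibre is nonempty; hence $f$ is surjective and $|V|$ is the number of enriched fibres. Passing to a $\sim_Y$-class forgets the labelling of $V$ while, by the definition of the weighting, recording the weight $y^{|V|}$; so an element $[s]_Y$ of the weighted set $\simsum_V\Fun[R^c][U,V]$ is nothing but a set $\{(S_1,r_1),\dots,(S_k,r_k)\}$ in which $\{S_1,\dots,S_k\}$ partitions $U$ and $r_i\in R^c[S_i]$, carrying weight $y^k$ — and that is exactly an $E(yR^c(X))$-structure on $U$. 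I would therefore let $\alpha_U$ send $[s]_Y$ to the set of enriched fibres of $s$, and check that $\alpha_U$ is a weight-preserving bijection: surjectivity follows by relabelling the blocks of a partition by an arbitrary $k$-element set, and injectivity from the fact that two $\Fun[R^c]$-structures with the same set of enriched fibres differ only by a relabelling of their codomains, hence are $\sim_Y$-equivalent.

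The delicate point, which I expect to be the main obstacle, is the bookkeeping of transport of structure: one must verify (i) that $\alpha_U$ is independent of the representative $s$ of $[s]_Y$, and (ii) that the family $(\alpha_U)_U$ is natural, i.e.\ commutes with the transport of structure along every bijection $\sigma\colon U\to U'$. Both follow once transport for $E(R^c(X)Y)$ is written out: on $(f,(r_v)_v)$ it produces $\bigl(\tau\circ f\circ\sigma^{-1},\ (R^c[\sigma|_{f^{-1}(v)}](r_v))_v\bigr)$, so the codomain labels are moved only by $\tau$ while the $R^c$-structure on the fibre $f^{-1}(v)$ is transported solely by $\sigma|_{f^{-1}(v)}$. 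Taking $\sigma=\Id_U$ and letting $\tau$ vary shows the set of enriched fibres is a $\sim_Y$-invariant, giving (i); for general $\sigma$ both $\alpha_{U'}$ applied to the transported class and $E(yR^c(X))[\sigma]$ applied to $\alpha_U([s]_Y)$ equal $\bigl\{(\sigma(f^{-1}(v)),\,R^c[\sigma|_{f^{-1}(v)}](r_v)):v\in V\bigr\}$, giving (ii). As a consistency check one can note that specialising \eqref{eq:Z_Phi_R} (with $R^c$ in place of $R$) in the manner $\yFun[R^c]=Z_{\Fun[R^c]}(x,0,0,\dots;y,y^2,y^3,\dots)$ collapses the sum, since $R^c(0)=0$, to $\exp\bigl(yR^c(x)\bigr)$, which is precisely the generating series of $E(yR^c(X))$.
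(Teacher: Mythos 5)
Your proposal is correct and follows essentially the same route as the paper: both define $\alpha_U$ by sending the class $[s]_Y$ to its set of $R^c$-enriched fibres (the paper writes this as $\{s_v : v\in V\}$), then check weight preservation, independence of the representative, and naturality. You simply carry out in detail the verifications the paper leaves to the reader, and add a generating-series consistency check that is not needed for the argument.
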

\begin{proof}
  A typical $\tYFun[R^c]$-structure on a finite set $U$ is an
  equivalence class $[s]_Y$, where $s\in\Fun[R^c][U,V]$ for some finite
  set $V=\{v_1,\dots,v_k\}$. This representative, $s$, can in turn be
  written $s=\{(s_v, v): s_v\in R^c[B_v], v\in V\}$ for some partition
  $\{B_{v_1},\dots,B_{v_k}\}$ of $U$. Define
  $\alpha_U: \simsum_V\Fun[R^c][U,V] \to R_w[U]$ by
  $\alpha_U(s)=\{s_v: v\in V\}$. It's easy to see that this definition
  is independent of the representative for $[s]_Y$. It preserves weights
  too: the weight of $[s]_Y$ is $y^{|V|}=y^k$, which is also the weight
  of $\{s_v: v\in V\}$. Further, for any bijection $\sigma:U\to U'$, the
  relevant diagram commutes and thus $\alpha_U$ is the desired natural
  isomorphism.
\end{proof}

This proposition applies to every other row of
Table~\ref{table:manifold}: the connected components of a set, $E(X)$, are
the points, $X$; the connected components of a partition, $\Par(X)$, are
the blocks, $E_+(X)$; the connected components of a $\Lah$-structure are
the linear orders, $L_+(X)$; and the connected components of a permutation
are the cycles, $\Cyc(X)$. See also Figure~\ref{fig:balls-in-urns} for an
illustration of these cases. Letting
$E_w(X)=E(yX)$,
$\Par_w(X)=E(yE_+(X))$, $\Lah_w(X)=E(yL_+(X))$, and $\Sym_w(X)=E(y\Cyc(X))$, and applying
Proposition~\ref{prop:components} we have
$$
\arraycolsep=2pt
\renewcommand{\arraystretch}{1.5}
\begin{array}{rclcrcl}
  \tYFun[X] &=& E_w(X); && \tYFun[E_+] &=& \Par_w(X); \\
  \tYFun[L_+] &=& \Lah_w(X); && \tYFun[\Cyc] &=& \Sym_w(X).
\end{array}
$$
We could add to this list by considering other natural pairs $R(X)$ and
$R^c(X)$ such as graphs and connected graphs, or forests and trees.
\begin{figure}
  \[
    \includegraphics[width=0.55\textwidth]{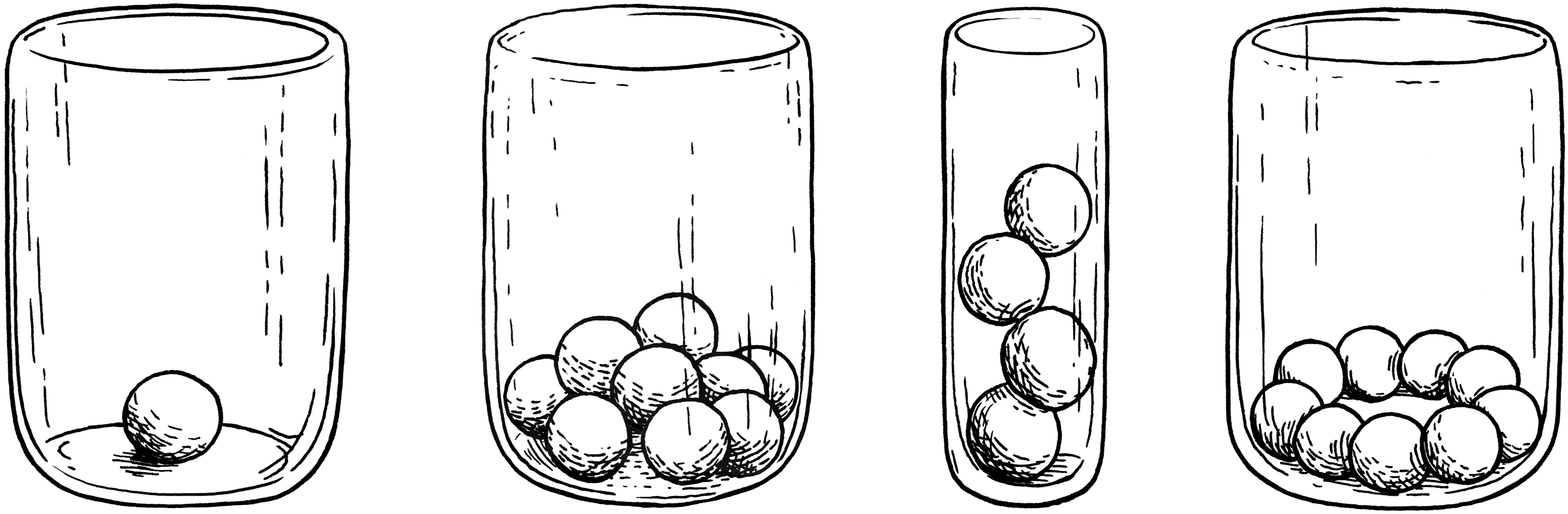}
  \]
  \caption{Urns illustrating $R(X)=X$, $E_+(X)$, $L_+(X)$, and $\Cyc(X)$}
  \label{fig:balls-in-urns}
\end{figure}

Returning to the Stirling numbers, we see from above that
$$
\yFun[\Cyc] = \Sym_w(x) = (1-x)^{-y}
    = \sum_{n,k\geq 0}c(n,k)\,\frac{x^n}{n!}y^k,
$$
where $c(n,k)$ denotes an unsigned Stirling number of the first kind. The other three
series for $R(X)=\Cyc(X)$ are $\xFun[\Cyc] =\xFun[E_+]$,
$\uFun[\Cyc](x,y) = \uFun[E_+](x,y)$, and
$\Fun[\Cyc](x,y) = (1-x)^{-y}= \sum_{n,k\geq 0}k!
c(n,k)\,\frac{x^n}{n!}\frac{y^k}{k!}$.  Each choice of $R(X)$ that we have
discussed up to this point can be found in Table~\ref{table:R}, and the
four series associated with that choice are listed in
Table~\ref{table:manifold}.

We will stop here, but note that there are, of course, other choices for
$R(X)$ that may be natural, but we have omitted. For instance, setting
$$R(X)=E_{\mathrm{odd}}(X) = E_1(X)+E_3(X)+\cdots
$$
would require that each urn contains an odd number of balls.


\bibliographystyle{abbrv}
\bibliography{bibliography.bib}

\begin{thebibliography}{1}

\bibitem{Bergeron1998}
F.~Bergeron, G.~Labelle, and P.~Leroux.
\newblock {\em Combinatorial species and tree-like structures}, volume~67 of
  {\em Encyclopedia of Mathematics and its Applications}.
\newblock Cambridge University Press, Cambridge, 1998.
\newblock Translated from the 1994 French original by Margaret Readdy, With a
  foreword by Gian-Carlo Rota.

\bibitem{Bogart2004}
K.~P. Bogart.
\newblock {\em Combinatorics through guided discovery}.
\newblock Kenneth P. Bogart, 2004.

\bibitem{Montmort1713}
P.~R. de~Montmort.
\newblock {\em Essay d'analyse sur les jeux de hazard}.
\newblock Chelsea Publishing Co., New York, second edition, 1980.

\bibitem{Eilenberg1945}
S.~Eilenberg and S.~MacLane.
\newblock General theory of natural equivalences.
\newblock {\em Trans. Amer. Math. Soc.}, 58:231--294, 1945.

\bibitem{Joyal1981}
A.~Joyal.
\newblock Une th\'{e}orie combinatoire des s\'{e}ries formelles.
\newblock {\em Adv. in Math.}, 42(1):1--82, 1981.

\bibitem{Joyal1986}
A.~Joyal.
\newblock Foncteurs analytiques et esp\`eces de structures.
\newblock In {\em Combinatoire \'{e}num\'{e}rative ({M}ontreal, {Q}ue.,
  1985/{Q}uebec, {Q}ue., 1985)}, volume 1234 of {\em Lecture Notes in Math.},
  pages 126--159. Springer, Berlin, 1986.

\bibitem{Lah1955}
I.~Lah.
\newblock Eine neue {A}rt von {Z}ahlen, ihre {E}igenschaften und {A}nwendung in
  der mathematischen {S}tatistik.
\newblock {\em Mitteilungsbl. Math. Statist.}, 7:203--212, 1955.

\bibitem{EC1}
R.~P. Stanley.
\newblock {\em Enumerative combinatorics. {V}ol. 1}, volume~49 of {\em
  Cambridge Studies in Advanced Mathematics}.
\newblock Cambridge University Press, Cambridge, 1997.
\newblock With a foreword by Gian-Carlo Rota, Corrected reprint of the 1986
  original.

\bibitem{Tsylova1985}
E.~G. Tsylova.
\newblock The asymptotic behavior of generalized stirling numbers.
\newblock {\em Combinatorial-Algebraic Methods in Applied Mathematics},
  158:143--154, 1985.

\end{thebibliography}

\bigskip
{\footnotesize
\textsc{Division of Mathematics, The Science Institute, University of Iceland}
}

\end{document}